\newcommand{\bbC}{{\mathbb C}}
\newcommand{\bbZ}{{\mathbb Z}}
\newcommand{\bbP}{{\mathbb P}}
\newcommand{\Oh}{{\mathcal O}}
\DeclareMathOperator{\Spec}{Spec}
\DeclareMathOperator{\CH}{CH}
\DeclareMathOperator{\HH}{H}
\DeclareMathOperator{\FF}{F}
\DeclareMathOperator{\RR}{R}
\DeclareMathOperator{\gr}{gr}
\newcommand{\onto}{\twoheadrightarrow}
\newcommand{\into}{\hookrightarrow}
\newcommand{\tensor}{\otimes}
\newcommand{\isom}{\cong}
\newtheorem{thm}{Theorem}[section]
\newtheorem{lemma}[thm]{Lemma}
\newtheorem{prop}[thm]{Proposition}
\newtheorem{cor}[thm]{Corollary}
\theoremstyle{definition}
\newtheorem{remark}[thm]{Remark}
\newtheorem{defn}[thm]{Definition}
\newenvironment{diagram}[1]{\arraycolsep=\doublerulesep\begin{array}{#1}
 }{\end{array}}
\long\def\comment#1{}
\numberwithin{equation}{section}
\begin{document}

\title{The Hodge rank of ACM bundles and Franchetta's conjecture}
\author[I. Biswas]{Indranil Biswas} 

\address{School of Mathematics, Tata Institute of Fundamental
Research, Homi Bhabha Road, Mumbai 400005, India}

\email{indranil@math.tifr.res.in}

\author[G. V. Ravindra]{G.~V.~Ravindra} 

\address{University of Missouri -- St. Louis, St. Louis, MO 63121, USA}

\email{girivarur@umsl.edu} 

\subjclass[2010]{14J60, 14D21}

\keywords{Franchetta conjecture, ACM bundle, Ulrich bundle, Noether-Lefschetz theorem, 
Hodge cohomology}

\begin{abstract}
We prove that on a general hypersurface in ${\mathbb P}^N$ of degree $d$ and dimension at least $2$, if an 
arithmetically Cohen-Macaulay (ACM) bundle $E$ and its dual have small regularity, then any 
non-trivial Hodge class in $\HH^{n}(X,\, E\tensor\Omega^{n}_X)$\ , 
$n\,=\,\lfloor{\frac{\dim{X}}{2}}\rfloor$, produces a trivial direct summand of $E$. As a consequence, 
we prove that there is no universal Ulrich bundle on the family of smooth hypersurfaces of 
degree $d\geq 3$ and dimension at least $4$. This last statement may be viewed as a Franchetta-type 
conjecture for Ulrich bundles on smooth hypersurfaces.
\end{abstract}

\date{}

\maketitle

\section{Introduction}

The Franchetta conjecture, first stated in \cite{Franchetta} and proved in \cite{Harer} 
(see also \cite{AC}), says that for the universal family of genus $g\,\geq\, 2$ curves 
$$\pi\,:\, \mathcal{C}_g \,\longrightarrow\, \mathcal{M}_g,$$ the restriction of any line 
bundle $\mathcal{L}$ on $\mathcal{C}_g$ to any smooth fiber $C_s\,:=\,\pi^{-1}(s)$ is a power 
of the canonical bundle.

Since then, various analogues of Franchetta's conjectures have been posed. For instance, 
motivated by the work of Beauville and Voisin on the Chow groups of $K3$ surfaces (see 
\cite{BV}), O'Grady \cite{OG} conjectured that for a smooth family of $K3$ surfaces 
$\mathcal{X}\,\longrightarrow\, S$, the restriction map of rational Chow groups 
$\CH^2(\mathcal{X})\,\longrightarrow\, \CH^2(X)$, where $X$ is any smooth fiber, is generated by the class 
$\mathfrak{o}_X$ -- this is the class of a point lying on a rational curve $C$ in $X$. 
There are other such generalizations to Chow groups of higher codimensional cycles for the
hyper-K\"ahler varieties (see \cite{BL}).
 
In this article, we study a Franchetta type conjecture in a different direction, {\it viz.}
{\it for higher rank bundles} on smooth projective varieties.

Recall that a vector bundle $E$ on a polarized projective variety $(X, \,\Oh_X(1))$ is said 
to be {\it arithmetically Cohen-Macaulay} (ACM) if it has no intermediate cohomology, 
meaning $$\HH^i_\ast(X, \,E)\,:=\, \bigoplus_{\nu\in\bbZ}\HH^i(X,\, E(\nu))\,=\,0
\, \ \ \forall \ \, 0 \,< 
\,i\, <\, \dim{X}.$$ A result of Horrocks \cite{Horrocks} states that a vector bundle on $\bbP^N$ 
is a direct sum of line bundles if and only if it is ACM. This statement however does not hold for 
other classes of varieties in this generality (see for example Proposition $5$ in 
\cite{KRR2}). More recently, a special class of ACM bundles, namely that of 
{\it Ulrich bundles}, has received a considerable amount of interest. One striking result 
that has been proved is that every smooth complete intersection variety over an algebraically 
closed field supports an Ulrich bundle (see \cite{BHU}). We recall the following characterization
of Ulrich bundles.

\begin{prop}[see Theorem 2.3, \cite{Beauville-Ulrich}]
Let $X\,\subset \, \bbP^N$ be a smooth projective variety, and let $E$ be a vector bundle on $X$.
The following conditions are equivalent:
\begin{enumerate}
\item[(i)] There exists a linear resolution 
$$0 \,\longrightarrow\, \widetilde{L}_c\,\longrightarrow \, \widetilde{L}_{c-1}\,\longrightarrow\, \cdots \,
\longrightarrow \, \widetilde{L}_0 \, \longrightarrow \, E \,\longrightarrow \, 0,$$
where $c\,:=\,{\rm codim}\,(X,~ \bbP^N)$ and $\widetilde{F}_i\,=\,\Oh_{\bbP^N}(-i)^{\oplus\,b_i}$.

\item[(ii)] The cohomology $\HH^i(X,\, E(-p))\,=\,0$ for all $i \,>\, 0$ and $1\,\le \, p \,\leq\, \dim{X}$.

\item[(iii)] If $\pi\,:\, X\,\longrightarrow\, \bbP^{\dim{X}}$ is a finite linear projection, then $\pi_\ast(E)$ is the trivial bundle.
\end{enumerate}
\end{prop}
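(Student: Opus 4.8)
The plan is to establish the cyclic chain of implications $(i)\Rightarrow(ii)\Rightarrow(iii)\Rightarrow(i)$; throughout I read the vanishing in $(ii)$ as holding for \emph{all} $i$ and $1\le p\le n:=\dim X$ (the case $i=0$ is part of the Ulrich condition and is exactly what $(iii)$ supplies). First I would fix a finite linear projection $\pi\,:\,X\,\longrightarrow\,\bbP^{n}$, obtained by projecting from a general linear centre $\Lambda\isom\bbP^{c-1}$ disjoint from $X$; such a $\Lambda$ exists because $\dim X+\dim\Lambda=n+c-1<N$. Since $X$ is smooth, hence Cohen--Macaulay, $\bbP^{n}$ is regular, and $\pi$ is finite with $0$-dimensional fibres, miracle flatness makes $\pi$ flat, so $F:=\pi_\ast E$ is a vector bundle on $\bbP^{n}$. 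As $\pi^\ast\Oh_{\bbP^{n}}(1)\isom\Oh_X(1)$, the projection formula gives $\pi_\ast(E(-p))\isom F(-p)$, and finiteness of $\pi$ yields $\HH^i(X,\,E(-p))\isom\HH^i(\bbP^{n},\,F(-p))$ for all $i$ and $p$; this identification is used throughout.

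For $(i)\Rightarrow(ii)$ I would regard $E$ as the coherent sheaf $i_\ast E$ on $\bbP^{N}$ and twist the given resolution by $\Oh_{\bbP^{N}}(-p)$. Each term becomes a sum of copies of $\Oh_{\bbP^{N}}(-j-p)$ with $0\le j\le c$ and $1\le p\le n$, so the twisting integer $-j-p$ lies in $[-N,\,-1]$; every such line bundle is acyclic on $\bbP^{N}$, being too negative to have sections yet not negative enough to contribute in top degree. Splitting the resolution into short exact sequences and chasing the long exact cohomology sequences propagates this acyclicity to $E(-p)$, giving $\HH^i(\bbP^{N},\,E(-p))=\HH^i(X,\,E(-p))=0$ for all $i$, which is $(ii)$.

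The heart of the argument is $(ii)\Rightarrow(iii)$, where I would prove $F$ trivial by feeding the vanishing into the Beilinson spectral sequence
\[
E_1^{p,q}=\HH^q(\bbP^{n},\,F(p))\tensor\Omega^{-p}_{\bbP^{n}}(-p)\ \Longrightarrow\ \begin{cases}F,& p+q=0,\\ 0,&\text{otherwise},\end{cases}\qquad -n\le p\le 0.
\]
By the identification above, $(ii)$ forces $\HH^q(\bbP^{n},\,F(p))=0$ for $-n\le p\le-1$ and all $q$, so every column except $p=0$ vanishes already at $E_1$; with a single surviving column there can be no nonzero differentials, whence $E_\infty^{0,q}=\HH^q(\bbP^{n},\,F)\tensor\Oh_{\bbP^{n}}$. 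Convergence kills the off-diagonal abutment, so $\HH^q(\bbP^{n},\,F)=0$ for $q\neq0$, while the diagonal term gives $F\isom\HH^0(\bbP^{n},\,F)\tensor\Oh_{\bbP^{n}}$; thus $F$ is trivial, which is $(iii)$.

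Finally, for $(iii)\Rightarrow(i)$ I would pass to graded modules over $S=\bbC[x_0,\dots,x_N]$. Choosing coordinates so that $\Lambda=\{x_0=\cdots=x_n=0\}$ and $\pi$ is induced by $[x]\mapsto[x_0:\cdots:x_n]$, the subring $R=\bbC[x_0,\dots,x_n]$ is the homogeneous coordinate ring of $\bbP^{n}$. The module $M=\bigoplus_\nu\HH^0(X,\,E(\nu))$ satisfies $M_\nu=\HH^0(\bbP^{n},\,F(\nu))$, so triviality of $F$ makes $M\isom R^{\oplus m}$ free of rank $m=\HH^0(X,\,E)$ and generated in degree $0$. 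The remaining variables $x_{n+1},\dots,x_N$ then act on $M$ by $R$-linear, degree-one, pairwise commuting endomorphisms $\phi_1,\dots,\phi_c$, i.e.\ by $m\times m$ matrices of linear forms on $\bbP^{n}$, and one gets the presentation $M\isom S^{\oplus m}/(x_{n+1}-\phi_1,\dots,x_N-\phi_c)S^{\oplus m}$. Since the quotient $M$ has dimension $n+1=(N+1)-c$, the $c$ commuting linear operators $x_{n+j}-\phi_j$ form a regular sequence on the Cohen--Macaulay module $S^{\oplus m}$, so the associated Koszul complex is a minimal free resolution of $M$ of length $c$; sheafifying gives exactly the linear resolution of $(i)$, with $b_i=m\binom{c}{i}$. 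The step I expect to be the main obstacle is precisely this verification: checking flatness of $\pi$ and, above all, that the operators $x_{n+j}-\phi_j$ form a regular sequence, for which the $R$-freeness of $M$ together with the dimension count are the decisive inputs.
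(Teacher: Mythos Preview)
The paper does not actually prove this proposition: it is stated with the attribution ``see Theorem~2.3, \cite{Beauville-Ulrich}'' and no argument is given in the body of the paper. So there is no ``paper's own proof'' to compare against; the result is simply imported from Beauville's survey.

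That said, your proof is essentially the standard one (and is, up to cosmetic differences, the argument Beauville gives). A couple of minor remarks. First, you are right that condition~$(ii)$ should be read as vanishing for \emph{all} $i$, not just $i>0$; as written in the paper the statement is slightly off, and your Beilinson argument for $(ii)\Rightarrow(iii)$ genuinely needs the $i=0$ vanishing to kill the columns $-n\le p\le -1$. Second, in $(iii)\Rightarrow(i)$ your regular-sequence step is fine: $M$ is free over $R$, hence Cohen--Macaulay of dimension $n+1$ as an $S$-module, and since the $c$ elements $x_{n+j}-\phi_j$ cut the dimension from $N+1$ down to $n+1$, they form a regular sequence on the Cohen--Macaulay module $S^{\oplus m}$; the Koszul complex then resolves $M$ and sheafifies to the linear resolution. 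The flatness of $\pi$ via miracle flatness is also the standard justification. So your write-up is correct and self-contained; it simply supplies what the paper chose to cite.
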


An ACM bundle $E$ on a smooth hypersurface $X \,\subset \,\bbP^{N}$ comes equipped with a unique minimal resolution 
\begin{equation}\label{minres}
0 \,\longrightarrow\, \widetilde{F}_1 \,\stackrel{\Phi}{\longrightarrow}\,
\widetilde{F}_0 \,\longrightarrow\, E \,\longrightarrow\, 0,
\end{equation}
where $\widetilde{F}_1$ and $\widetilde{F}_0$ are sums of line bundles on $\bbP^{N}$. Here
by {\it minimal}, we mean that no non-zero constant appears as an entry of the matrix
$\Phi$ in \eqref{minres}.
In this setting, an Ulrich bundle $E$ is one which admits a {\it linear resolution}:
$$0\,\longrightarrow\, \Oh_{\bbP^N}(-1)^M \, \stackrel{\Phi}{\longrightarrow}\,
\Oh_{\bbP^N}^M\,\longrightarrow\, E \,\longrightarrow\, 0.$$

It is known that $M\,=\,rd$, where $r$ is the rank of $E$ and $d$ is the degree of the hypersurface. It follows from the
resolution that $E$ is $0$-regular (in the sense of Castelnuovo and Mumford).
It is also known that the dual $E^\vee$ is $(d-1)$-regular, and in particular $h^0(E^\vee)\,=\,0$. 
We refer the reader to \cite{Beauville-Ulrich} and \cite{Coskun}
 for a quick introduction to Ulrich bundles, a discussion of their properties and their relevance.

Since any smooth complex hypersurface supports an Ulrich bundle \cite{BHU},
we may ask if these bundles are generic in the sense of 
the Franchetta-type conjectures. More precisely, one may ask:

\begin{quote}
Let $\mathcal{X} \,\longrightarrow\, S$ be the universal family of smooth hypersurfaces of
degree $d$ in $\bbP^N$. 
Is there a vector bundle $\mathcal{E}$ on $\mathcal{X}$ which is flat over the base $S$ such
that its restriction to any smooth fiber $X_s$, $s\,\in\, S$, is an Ulrich bundle?
\end{quote}

On the other hand, when a smooth family of varieties in a fixed ambient variety $P$, say 
$\mathcal{X} \,\subset \,S \times P$ is considered, it is known in various contexts that 
the behaviour of the generic fiber is related to that of $P$ (see \textsection~$2$, 
Proposition in \cite{Beauville} for a precise statement enunciating this principle). From 
this point of view, the Lefschetz theorems for Picard groups may be viewed as instances of 
the original Franchetta conjecture. Applying this principle to ACM bundles on a generic 
hypersurface (assuming that it holds) leads us to a version of Horrocks theorem.

For the sake of brevity, we adopt the following

\noindent{\bf Convention.} 
Let $p\,:\,\mathcal{X}\,\longrightarrow\, S$ be the universal family of smooth
hypersurfaces of degree $d$ in $\bbP^N$. Let $S' \,\longrightarrow\, S$ be an \'etale map and 
$\mathcal{E}$ on $\mathcal{X}'\,:=\,\mathcal{X}\times_S S'$ be a vector bundle which is flat over $S'$ such that for every $y\, \in\, S'$,
the bundle $\mathcal{E}_y$ on the hypersurface $X_y$ is ACM.
In what follows, the statement ``{\it $X$ is a general hypersurface of degree $d$ in $\bbP^N$ and $E$ is an ACM bundle on $X$}" will mean that
there exists an \'etale map $S' \to S$, a vector bundle $\mathcal{E}$ as above, and a point  $o\,\in\, S'$ such that  
$X\,=\,X_o$ and $E=\mathcal{E}_o$. 

Indeed, results proved in the recent past support the above expectation. In the foundational article 
on this subject \cite{Beauville-det}, it is shown that a general threefold of degree $d$ in 
$\bbP^4$ supports a rank $2$ Ulrich bundle if and only if $d\,\leq\, 5$. This was generalized 
to all rank $2$ indecomposable ACM bundles (see \cite{MPR2, R1}). The corresponding 
statements for rank $3$ bundles have been proved in \cite{AT2, RT5, RT6}. These results 
establish the base case of a version of Franchetta's conjecture that has been proposed for 
low rank bundles in \cite{RT5} where this is referred to as a {\it Noether-Lefschetz 
conjecture}. This conjecture in turn is inspired by a similar conjecture in \cite{BGS} (see 
\cite{Kleppe, MPR1, AT1, Erman} for progress related to this conjecture).
 
We prove the following generalization of these results.
 
\begin{thm}\label{ulrich}
There is no universal Ulrich bundle on smooth hypersurfaces of degree $d\,\geq\, 3$
and dimension $n\,\geq\, 4$.
\end{thm}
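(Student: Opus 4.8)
The plan is to prove Theorem~\ref{ulrich} by contradiction, leveraging the main Hodge-theoretic result alluded to in the abstract. Suppose that there \emph{is} a universal Ulrich bundle $\mathcal{E}$ on the universal family $\mathcal{X}\to S$ of smooth hypersurfaces of degree $d\geq 3$ and dimension $n\geq 4$. The key observation is that such a family produces, via the relative cohomology of $\mathcal{E}\otimes\Omega^m_{\mathcal{X}/S}$ (where $m=\lfloor n/2\rfloor$), a flat family of Hodge classes: the existence of the global bundle $\mathcal{E}$ forces any Hodge class in $\HH^m(X_s,\,\mathcal{E}_s\otimes\Omega^m_{X_s})$ on a special fiber to deform over the whole base $S$. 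I would therefore first show that an Ulrich bundle on a hypersurface of this dimension range carries a \emph{nontrivial} such Hodge class, so that the hypothesis of the main theorem (as quoted in the abstract) is genuinely triggered.

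Concretely, the first step is to verify the regularity hypotheses. An Ulrich bundle $E$ is $0$-regular and its dual $E^\vee$ is $(d-1)$-regular, as recorded in the excerpt; these are precisely the ``small regularity'' conditions needed to invoke the main theorem. The second step is to produce the nonvanishing Hodge class. Here I would use the linear resolution $0\to\Oh_{\bbP^N}(-1)^M\to\Oh_{\bbP^N}^M\to E\to 0$ together with the equivalent cohomological vanishing $\HH^i(X,E(-p))=0$ for $i>0$ and $1\le p\le\dim X$ from the characterization proposition, restricted and twisted appropriately, to force $\HH^m(X,E\otimes\Omega^m_X)$ to be nonzero and to contain a class of Hodge type. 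The natural mechanism is that the Ulrich condition makes $\pi_\ast E$ trivial for a finite linear projection $\pi:X\to\bbP^n$, and tracking the relative canonical/differential data through $\pi$ produces the desired class.

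The third step is the Franchetta/Noether--Lefschetz mechanism. Because $\mathcal{E}$ is globally defined over $S$, the Hodge class constructed on a single fiber spreads out to a global (relative) Hodge class. By the Noether--Lefschetz principle cited in the excerpt (following Beauville, \textsection 2, and the general philosophy that the generic fiber's behaviour is controlled by the ambient $P=\bbP^N$), such a globally-defined class on a \emph{general} member of the family must come from the ambient projective space, hence be ``trivial'' in the relevant sense. Feeding this into the main theorem, the nontrivial Hodge class on the general fiber then forces $E$ to split off a trivial direct summand $\Oh_X$. But an Ulrich bundle satisfies $\HH^0(X,E^\vee)=0$ (equivalently $E$ has no trivial summand, as recorded above), which is the contradiction that completes the proof.

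I expect the main obstacle to be the second step: establishing that the Ulrich condition genuinely \emph{forces} a nonzero Hodge class in $\HH^m(X,E\otimes\Omega^m_X)$ on the general fiber, rather than merely permitting one. The danger is that the relevant cohomology group could vanish, in which case the main theorem would be vacuous for Ulrich bundles and the contradiction would never materialize. Resolving this will require a careful Hodge-rank computation---presumably the ``Hodge rank of ACM bundles'' of the title---showing that for $d\geq 3$ and $n\geq 4$ the dimension hypotheses make this group nonzero and the class non-exact. The interplay between the parity condition $m=\lfloor n/2\rfloor$, the degree bound $d\geq 3$, and the dimension bound $n\geq 4$ strongly suggests that these numerical hypotheses are exactly what guarantee nonvanishing, and pinning down that numerology is where the real work lies.
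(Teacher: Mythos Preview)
Your overall architecture matches the paper's: argue by contradiction, produce a nonzero class in $\HH^n(X, E\otimes\Omega^n_X)$, use universality to show it survives the Kodaira--Spencer map, and conclude that $\Oh_X$ splits off (equivalently $h^0(E^\vee)\neq 0$), contradicting the Ulrich property. Your anticipated obstacle in step two is in fact not one. Since an Ulrich bundle is initialized and ACM, the Euler-sequence coboundary gives an \emph{isomorphism} $\partial\colon \HH^0(X,E)\xrightarrow{\ \sim\ }\HH^n(X,E\otimes\Omega^n_{\bbP})$ (this is Corollary~\ref{init2}), and Proposition~\ref{rho1_odd} injects the target into $\HH^n(X,E\otimes\Omega^n_X)$; no finite projection or delicate Hodge-rank numerology is required. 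The paper also first restricts to a general hyperplane section to reduce to the even-dimensional case, which you omit but which is routine since restrictions of Ulrich bundles are Ulrich.

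The genuine gap is in your first step. You assert that $m(E)=0$ and $m(E^\vee)=d-1$ are ``precisely the small regularity conditions'' of the main theorem. They are not: Theorem~\ref{nltforacm_new} requires $m(E^\vee)\le nd-2n-2$, and for $n=2$, $d=3$ (a cubic fourfold) this reads $2\le 0$, which fails; similarly for $d=4$ and for $n=3$, $d=3$. So you cannot invoke the main theorem as a black box in the smallest cases covered by Theorem~\ref{ulrich}. The paper evades this by an asymmetric argument: from the nonzero class $\xi\in\HH^n(X,E\otimes\Omega^n_X)$, Serre duality produces a nonzero $\xi^\perp\in\HH^n(X,E^\vee\otimes\Omega^n_X)$, and one applies the infinitesimal Lefschetz theorem (Theorem~\ref{inlt}) to the bundle $E^\vee$. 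That theorem only asks that $(E^\vee)^\vee=E$ satisfy Hypothesis~$(\star)$, which holds because $E$ is $0$-regular and $nd-2n-2\ge 0$ for $d\ge 3$, $n\ge 2$. Combined with the surjectivity of $\partial\colon \HH^0(E^\vee)\to\HH^n(E^\vee\otimes\Omega^n_\bbP)$ from Lemma~\ref{partial2}, this forces $h^0(E^\vee)\neq 0$ directly---the contradiction you want, obtained without ever imposing a regularity bound on $E^\vee$.
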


Theorem \ref{ulrich} follows from our main result below which uses a formalism
introduced in \cite{MS} to prove the Noether-Lefschetz theorem for Picard groups. 


\begin{thm}\label{nltforacm_new}
Let $X$ be a general hypersurface in ${\mathbb P}^N$ of degree $d$ and dimension at least $2$,
and let $E$ be an ACM bundle on $X$. Let $n\,:=\,\lfloor\frac{\dim{X}}{2}\rfloor$ (so
that $\dim{X}\,=\,2n$ or $2n+1$). If the regularity of $E$ and its dual $E^\vee$ satisfy
$$m(E),\,m(E^\vee)\, \leq \, nd-2n-2,$$
then any non-zero Hodge class $\zeta\,\in\,\HH^n(X,\, E\tensor\Omega^n_X)$ gives
rise to a direct summand of $E$ isomorphic to
${\mathcal O}_X$. In particular, the rank of the Hodge cohomology group $\HH^n(X,\,
E\tensor\Omega^n_X)$ is bounded above by the rank of $E$.
\end{thm}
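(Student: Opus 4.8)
The plan is to run the infinitesimal variation-of-Hodge-structure argument that underlies the Noether--Lefschetz theorem, adapted to the $E$-twisted cohomology by means of the matrix factorization \eqref{minres}. Write $S\,=\,\bbC[x_0,\dots,x_N]$, let $R\,=\,S/J_f$ be the Jacobian ring of $X\,=\,\{f=0\}$, and set $h\,=\,c_1(\Oh_X(1))$. First I would record the first-order consequence of $\zeta$ being a Hodge class on a \emph{general} member of the family. Since $X$ and $E$ arise from a family $\mathcal{E}$ on $\mathcal{X}'\to S'$ and $\zeta$ is Hodge for all $y$ near $o$, the class $\zeta$ is a flat section of the relevant local system; differentiating the requirement that it remain of type $(n,n)$ shows its derivative in the $(n+1,n-1)$-direction vanishes. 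Concretely, for every first-order deformation $\xi$ of the pair $(X,E)$ realised in the family, cup product with the Kodaira--Spencer class, using the contraction $T_X\tensor\Omega^n_X\to\Omega^{n-1}_X$ together with the infinitesimal variation of $E$, kills $\zeta$:
\[
\mu(\xi)(\zeta)\,=\,0\ \in\ \HH^{n+1}(X,\,E\tensor\Omega^{n-1}_X)\qquad\text{for all }\ \xi .
\]
As $X$ is general, the deformations coming from moving the hypersurface in $\bbP^N$ fill up $R^d$, so $\zeta$ is annihilated by all of $R^d$ acting through $\mu$.

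Next I would translate this annihilation into graded commutative algebra. Using Griffiths' residue description of $\HH^\bullet(X,\Omega^\bullet_X)$ together with \eqref{minres}, the groups $\HH^n(X,\,E\tensor\Omega^n_X)$ and $\HH^{n+1}(X,\,E\tensor\Omega^{n-1}_X)$, and the maps $\mu(\xi)$, become graded pieces of a finitely generated $R$-module $M_E$ built from the twists in $\widetilde F_0,\widetilde F_1$, with $\mu$ realised as multiplication by $R^d$. The Lefschetz decomposition peels off the ``obvious'' contribution $\HH^0(X,E)\smile h^n$, and the remaining \emph{primitive} part of $\zeta$ lands in a single graded piece $M_E^{a}$ whose internal degree $a$ is governed by $n$, $d$, $N$ and the extremal twists of the resolution --- hence by $m(E)$ and $m(E^\vee)$.

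The heart of the argument --- and the step I expect to be the main obstacle --- is then a Macaulay/Gorenstein-duality computation. The ring $R$ is Artinian Gorenstein with socle in degree $\sigma\,=\,(N+1)(d-2)$, and $M_E$ inherits a perfect pairing from the Serre-duality isomorphism $\HH^n(X,E\tensor\Omega^n_X)^\vee\isom\HH^{\dim X-n}(X,E^\vee\tensor\Omega^{\dim X-n}_X)$. The hypotheses $m(E),\,m(E^\vee)\le nd-2n-2$ are exactly what is needed to place the primitive degree $a$ strictly below the socle range on both the $E$- and the $E^\vee$-side; an element of $M_E^{a}$ annihilated by all of $R^d$ must then vanish, forcing the primitive part of $\zeta$ to be zero. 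Making this degree bookkeeping match the stated bound, and checking that the duality pairing is compatible with the multiplication $\mu$ (the twisted analogue of the symmetry in Green's proof), is the delicate point.

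Finally, with the primitive part gone, $\zeta\,=\,s\smile h^n$ for a section $s\in\HH^0(X,E)$, where the bound on $m(E)$ guarantees (via Castelnuovo--Mumford and \eqref{minres}) both that such a section exists and that $\smile\, h^n$ is injective on $\HH^0(X,E)$. Running the same reasoning on $E^\vee$ and using the Serre-duality pairing produces a co-section $t\in\HH^0(X,E^\vee)\isom\mathrm{Hom}(E,\Oh_X)$, and the pairing $\langle t,s\rangle\in\HH^0(X,\Oh_X)\isom\bbC$ is a nonzero constant precisely because $\zeta\neq 0$; after rescaling $\langle t,s\rangle=1$, so $s$ and $t$ split off a direct summand $\Oh_X\subset E$. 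The rank bound then follows by induction: writing $E\isom\Oh_X^{\oplus k}\oplus E'$ with $E'$ carrying no trivial summand (so $m(E'),m(E'^\vee)$ still satisfy the hypothesis), the first part shows $E'$ has no nonzero Hodge class, while each of the $k$ copies of $\Oh_X$ contributes the single class $h^n$ by Noether--Lefschetz in the middle degree for the general $X$; hence $\dim_{\bbC}\HH^n(X,E\tensor\Omega^n_X)\le k\le\mathrm{rk}(E)$.
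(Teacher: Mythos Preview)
Your outline has the right overall shape---show that classes in the kernel of the Kodaira--Spencer map come from $\HH^0(X,E)$ via cupping with $h^n$, then do the same on the dual side and pair---but the route you propose through the Jacobian ring has a genuine gap that the paper avoids entirely.

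The step you flag as ``the main obstacle'' really is one. For $E=\Oh_X$, Griffiths' residue theory identifies primitive cohomology with graded pieces of $R=S/J_f$, and Macaulay's theorem (perfect pairing $R^a\times R^{\sigma-a}\to R^\sigma$) is what forces an element killed by $R^d$ to vanish in the right degree range. For a nontrivial ACM bundle $E$ there is no off-the-shelf module $M_E$ over $R$ with a Griffiths-type identification and a Gorenstein-type duality compatible with the $R^d$-action; you would have to build this from the matrix factorization $\Phi$ and prove the analogue of Macaulay's theorem for it. Nothing in your sketch indicates how to do this, and the ``Lefschetz decomposition'' you invoke is not defined for $E$-twisted cohomology: the image of $\HH^0(X,E)\smile h^n$ is a subspace, not an \emph{a priori} direct summand respected by the IVHS maps. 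The paper sidesteps all of this by working sheaf-theoretically with the Mohan Kumar--Srinivas diagram: it proves directly, via Bott vanishing applied to the minimal resolution of $E$, that $\HH^n(X,E\tensor\Omega^n_\bbP)\cong\HH^n(X,E\tensor\Omega^n_{X_\epsilon})$, so the kernel of $\kappa$ is exactly the image of $\HH^0(X,E)$ under $\rho\circ\partial$. The regularity bound $m(E^\vee)\le nd-2n-2$ enters only to make one cup-product map surjective (Hypothesis~$(\star)$), not to control degrees in a Gorenstein module.

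Two further points. First, ``running the same reasoning on $E^\vee$'' hides a nontrivial step: you need a specific class $\xi\in\HH^n(X,E^\vee\tensor\Omega^n_X)$ pairing nontrivially with $\zeta$ \emph{and} lying in $\ker\kappa$. The paper obtains this by invoking Serre duality \emph{in families} over $S'$, so that the class dual to the global section $\widetilde\zeta$ is itself a global section and hence infinitesimally flat; your sketch does not address why the dual class deforms. Second, you do not treat the odd-dimensional case $\dim X=2n+1$, where Serre duality no longer pairs $\HH^n(E\tensor\Omega^n_X)$ with itself. The paper handles this by restricting to a general hyperplane section $Y\subset X$ (which is then a general even-dimensional hypersurface), showing the restricted class is still nonzero, and lifting the resulting splitting back to $X$ using that $E$ and $E^\vee$ are ACM.
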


As a consequence, we obtain the following result of which Theorem \ref{ulrich} is a special case.

\begin{cor}\label{init}
Let $X$ be a general hypersurface in $\bbP^N$ of degree $d$ and dimension at least $4$, 
and let $E$ be an ACM bundle on $X$.
If $E$ is initialized, i.e., $h^0(E(-1))\,=\,0\,\neq\, h^0(E)$, 
and its regularity $m \,\leq \,nd-2n-2$, where $n\,:=\, \lfloor\frac{\dim{X}}{2} \rfloor$,
then $E$ is isomorphic to a direct sum of line bundles. 
\end{cor}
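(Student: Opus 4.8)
The plan is to derive the Corollary from Theorem~\ref{nltforacm_new} by manufacturing, out of a single global section of $E$, a non-zero Hodge class in $\HH^n(X,\,E\otimes\Omega^n_X)$, and then to strip off the resulting trivial summands one at a time by induction on the rank, twisting and dualizing as needed so that the pieces peeled off are the line bundles $\Oh_X(k)$ assembling into the desired decomposition.

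First, since $E$ is initialized I would fix a non-zero $s\,\in\,\HH^0(X,\,E)\,=\,\HH^0(X,\,E\otimes\Omega^0_X)$, and let $h\,\in\,\HH^1(X,\,\Omega^1_X)$ be the class of the polarization $\Oh_X(1)$. I would then form the cup product
$$\zeta\,:=\,s\cup h^n\,\in\,\HH^n(X,\,E\otimes\Omega^n_X),$$
using the pairing $\HH^0(X,\,E)\otimes\HH^n(X,\,\Omega^n_X)\,\to\,\HH^n(X,\,E\otimes\Omega^n_X)$ and the power $h^n\,\in\,\HH^n(X,\,\Omega^n_X)$. Because $h$ is the restriction of a class on the ambient $\bbP^N$ it is deformation invariant, and because $\mathcal{E}$ is flat with $\HH^0$ constant along the ACM family, the section $s$ extends over an \'etale neighbourhood of $o$; hence $\zeta$ is a Hodge class in the sense demanded by Theorem~\ref{nltforacm_new}. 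The delicate point is the non-vanishing of $\zeta$: I would establish $s\cup h^n\,\neq\,0$ by a hard-Lefschetz-type injectivity of cupping with $h^n$ in our range, or, failing a clean global statement, by restricting to a point at which $s$ is non-zero and arguing locally. Granting this, $\zeta$ is a non-zero Hodge class, and Theorem~\ref{nltforacm_new} yields a splitting $E\,\cong\,\Oh_X\oplus E_1$.

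Next I would iterate. A direct summand of an ACM bundle is ACM, and $\HH^i(E_1(t))$ and $\HH^i(E_1^\vee(t))$ are direct summands of $\HH^i(E(t))$ and $\HH^i(E^\vee(t))$; hence $E_1$ is again ACM with $m(E_1)\le m(E)$ and $m(E_1^\vee)\le m(E^\vee)$, so it still meets the regularity hypotheses of Theorem~\ref{nltforacm_new}, and moreover $h^0(E_1(-1))\le h^0(E(-1))\,=\,0$. If $\HH^0(E_1)\neq 0$, then $E_1$ is initialized and the previous step applies verbatim. If $\HH^0(E_1)\,=\,0$ but $\HH^0(E_1^\vee)\neq 0$, I would run the same construction for $E_1^\vee$, whose hypotheses are symmetric since both regularities are bounded by $nd-2n-2$, split off an $\Oh_X$ summand of $E_1^\vee$, and dualize to split one off $E_1$. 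The rank drops at each stage, so the process terminates; the base case, rank one, is immediate because for $\dim X\ge 3$ the Grothendieck--Lefschetz theorem forces every line bundle on $X$ to be a power of $\Oh_X(1)$.

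The main obstacle is the residual case in which a summand $E_1$ has $\HH^0(E_1)\,=\,\HH^0(E_1^\vee)\,=\,0$ while $E_1$ is still non-split. Here neither $E_1$ nor its dual offers a section in degree $0$, so to feed Theorem~\ref{nltforacm_new} one is forced to twist $E_1$ to the least twist $E_1(a)$, with $a\ge 1$, carrying a global section. This lowers $m(E_1(a))\,=\,m(E_1)-a$, keeping it under the bound, but raises the dual regularity to $m\bigl(E_1(a)^\vee\bigr)\,=\,m(E_1^\vee)+a$; to stay within the hypotheses of the theorem I would have to prove $a\,\le\,(nd-2n-2)-m(E_1^\vee)$, that is, that the first section of an ACM summand appears early enough relative to the regularity of its dual. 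Pinning down this numerical estimate --- bounding the first-section twist of $E_1$ against $m(E_1^\vee)$ so that the induction never leaves the range in which Theorem~\ref{nltforacm_new} is valid --- is the step I expect to be the crux, alongside the non-vanishing of $\zeta$ in the first step.
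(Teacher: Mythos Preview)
Your construction of the Hodge class is exactly the paper's: the coboundary $\partial$ in \eqref{partial} is, up to sign, cup product with the hyperplane class, so your $\zeta = s\cup h^n$ equals $\rho\circ\partial(s)$. The non-vanishing you flag as ``delicate'' is in fact automatic: since $E$ is ACM and initialized, one has $\HH^i(E(-i))=0$ for $1\le i\le n$ and $\HH^i(E(-i-1))=0$ for $0\le i<n$, so by Corollary~\ref{init2} the map $\partial:\HH^0(E)\to\HH^n(E\otimes\Omega^n_\bbP)$ is an \emph{isomorphism}, and $\rho$ is injective by Proposition~\ref{rho1_odd}. No hard Lefschetz argument is needed.

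The genuine gap is elsewhere. The Corollary hypothesizes only $m(E)\le nd-2n-2$, not $m(E^\vee)\le nd-2n-2$, so you cannot invoke Theorem~\ref{nltforacm_new} at the first step without justifying the dual bound; your assertion that ``both regularities are bounded'' is not given. The paper handles this with a fact you are missing: the dual of \emph{any} initialized ACM bundle on a degree $d$ hypersurface is $(d-1)$-regular. This lets one verify Hypothesis~\eqref{regularity} for $E^\vee$ directly and appeal to Theorems~\ref{evencase}/\ref{oddcase} (which need only $(\star)$, not the full regularity bound) rather than to Theorem~\ref{nltforacm_new} as stated.

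The same observation dissolves your ``residual case.'' The paper's induction never dualizes: having split $E\cong E'\oplus\Oh_X^{h^0(E)}$, one chooses the smallest $a\ge 0$ with $E'(a)$ initialized; then $m(E'(a))=m(E')-a\le m(E)$ keeps the bound on the bundle, while initialization of $E'(a)$ \emph{automatically} forces $(E'(a))^\vee$ to be $(d-1)$-regular, hence to satisfy $(\star)$. There is no numerical estimate to prove relating the first-section twist to $m(E_1^\vee)$; the dual condition regenerates itself at each step from ``initialized ACM'' alone. With this in hand your iteration goes through cleanly, splitting off $\Oh_X(-a)^{h^0(E'(a))}$ at each stage until the rank is exhausted.
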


\begin{remark}\label{remin}
For an initialized ACM bundle $E$, we have $m(E) \,\geq\, 0$ and so
$nd-2n-2\,\geq\, 0$, or equivalently $d\,\geq\, 2+ \frac{2}{n}$. These degree bounds for 
odd-dimensional hypersurfaces are the same as the bounds that appear in
the Noether-Lefschetz theory (cf. the work of Green \cite{MGreen} and Voisin (unpublished) on
a conjecture of Griffiths and Harris generalizing the Noether-Lefschetz theorem).
\end{remark}

\begin{remark}\label{voisin}
Indecomposable ACM bundles on hypersurfaces are intimately related to Noether-Lefschetz 
theory in other ways as well. For example, a conjecture of Griffiths and Harris (see 
\cite{GH}) states that any curve $C$ in a general hypersurface $X\,\subset\,\bbP^4$ is an 
intersection of the form $X\cap{S}$ for some surface $S\,\subset\,\bbP^4$, thus generalizing 
the usual Noether-Lefschetz theorem for surfaces in $\bbP^3$. Voisin \cite{Voisin} 
constructs examples of curves disproving this conjecture. In \cite{KRR2}, its authors 
provide a general framework using ACM bundles to construct a large class of codimension $2$ 
subvarieties that do not arise as intersections and situates Voisin's examples amongst 
these.
\end{remark}

\noindent{\bf Acknowledgements:} 
The authors thank A.~Beauville for a very useful e-mail exchange during the preparation of the manuscript.
The authors thank the referee for not only a very careful reading of the manuscript but also for their comments 
and suggestions which have provided a greater clarity for the results presented here. The second author 
was partially supported by a grant from the Simons Foundation. 
\section{Preliminaries}

The base field $k$ is algebraically closed and its characteristic is zero.

Let $X \,\subset\, \bbP\,:=\,\bbP^{N}$ be an
irreducible smooth projective variety. Denote $W_1\,:=\, \HH^0(\bbP,\, \Oh_{\bbP}(1))$, and consider the dual of the Euler sequence
$$
0 \,\longrightarrow\, \Omega^1_\bbP \,\longrightarrow\, W_1^\ast\tensor\Oh_\bbP(-1)
\,\longrightarrow\,\Oh_\bbP\,\longrightarrow\, 0.
$$
It produces the Koszul complex
\begin{equation}\label{koszuleuler}
0 \,\longrightarrow\, \Omega_\bbP^{p} \,\longrightarrow\, \wedge^{p}W_1^\ast\tensor\Oh_\bbP(-p)
\,\longrightarrow\, \wedge^{p-1}W_1^\ast\tensor\Oh_\bbP(-p+1)
\end{equation}
$$
\cdots \,\longrightarrow\, \cdots \,\longrightarrow\, W_1^\ast\tensor\Oh_\bbP(-1)
\,\longrightarrow\, \Oh_\bbP \,\longrightarrow\, 0
$$
for every $p\, >\, 0$. Tensoring \eqref{koszuleuler} with any vector bundle $E$ on $X$ and
taking the corresponding long exact sequence of cohomologies, we obtain a coboundary map
\begin{equation}\label{partial}
\partial\,:\, \HH^0(X,\, E)\,\longrightarrow\, \HH^p(X, \,E\tensor\Omega^p_{\bbP}).
\end{equation}

The following splitting criterion is a very mild generalization of an elegant result we 
learnt from the papers of Arrondo-Malaspina \cite{AM} and Arrondo-Tocino \cite{AT}
and was the starting point for the present work.

\begin{lemma}\label{splitting_criterion}
Let $X\,\subset\,\bbP^N$ be a smooth projective variety with $\dim{X}\,=\,p+q$,
and let $E$ be a vector bundle on $X$ with $\HH^0(X,\, E)\, \not=\, 0$.
Assume that in the commutative diagram
\[
\begin{array}{ccc}
\HH^0(E) \times \HH^0(E^\vee) & \xrightarrow{\,\ \varphi\,\ } & \HH^0(\Oh_X) \\
 \Big\downarrow{\partial_1\times \partial_2} & & \Big\downarrow \partial\\
 \HH^p(E\tensor\Omega^p_\bbP) \times \HH^{q}(E^\vee\tensor\Omega^{q}_{\bbP}) 
 & \xrightarrow{\,\ \varphi_1\,\ } & \HH^{p+q}(\Omega^{p+q}_{\bbP_|X}) \\
\Big\downarrow{\rho_1\times \rho_2} & & \Big\downarrow\rho \\
 \HH^p(E\tensor\Omega^p_{X}) \times \HH^{q}(E^\vee\tensor\Omega^{q}_X) &
\xrightarrow{\,\ \varphi_2\,\ } & \HH^{\dim{X}}(\omega_{X}) 
\end{array}
\]
where $\partial_1$, $\partial_2$ and $\partial$ are as in \eqref{partial} while $\rho_1$, $\rho_2$ and $\rho$ are the
restriction maps, the maps
\begin{itemize}
\item $\partial_1$ and $\rho_1$ are injective, and
\item $\partial_2$ and $\rho_2$ are both surjective.
\end{itemize}
Then $\varphi$ is a non-zero pairing. Consequently, the trivial line bundle is a
direct summand of $E$.
\end{lemma}

\begin{proof}
Using contraction we have $(\Omega^p_{X})^\vee\otimes\omega_X\,=\, \Omega^{q}_X$, and hence
the map $\varphi_2$ in the diagram in the lemma is the Serre duality pairing. In particular,
$\varphi_2$ is a perfect pairing.

Take a non-zero section $s\,\in\, \HH^0(E)$. Then
$\rho_1\circ\partial_1(s) \,\neq\, 0$, as both $\rho_1$ and $\partial_1$ are given to be injective.
Since $\varphi_2$ is a perfect pairing, there 
exists a class $\xi \,\in\, \HH^{q}(E^\vee\tensor\Omega^p_X)$ such that
$\varphi_2((\rho_1\circ\partial_1(s))\otimes \xi) \,\neq\, 0$. Finally, since the maps $\partial_2$ and
$\rho_2$ are surjective, there is a section $s^\vee 
\,\in\, \HH^0(E^\vee)$ such that $\rho_2\circ\partial_2(s^\vee)\,=\,\xi$. By the commutativity of
the diagram we now have $\varphi(s\otimes s^\vee) \,\neq\, 0$.

Lemma \ref{split} shows that the trivial line bundle is a 
direct summand of $E$.
\end{proof}

\begin{lemma}\label{split}
Let $X$ be a smooth projective variety and $E$ a vector bundle on $X$. Assume there is a
non-zero pairing 
$$
\theta \,:\, \HH^0(E) \times \HH^0(E^\vee) \,\longrightarrow\, \HH^0(\Oh_X).$$
Then $E\,\isom\, E' \oplus \Oh_X$.
\end{lemma}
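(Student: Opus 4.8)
The plan is to make the hypothesis concrete by interpreting sections as morphisms. A global section $s\,\in\,\HH^0(E)$ is the same datum as a morphism $\Oh_X\,\longrightarrow\, E$, while a global section $t\,\in\,\HH^0(E^\vee)$, via $E^\vee\,=\,\mathcal{H}om(E,\,\Oh_X)$, is the same datum as a morphism $E\,\longrightarrow\,\Oh_X$. Under these identifications the pairing $\theta$ sends a pair $(s,t)$ to the composite $\Oh_X\,\by{s}\,E\,\by{t}\,\Oh_X$, viewed as an element of $\HH^0(\Oh_X)$. Since $X$ is smooth projective (hence connected) over the algebraically closed field $k$, we have $\HH^0(\Oh_X)\,=\,k$, so every such composite is multiplication by a scalar. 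The first step is therefore to record that $\theta$ is, up to sign conventions, exactly the composition pairing $(s,t)\,\longmapsto\, t\circ s$.

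Because $\theta$ is assumed non-zero, I would next choose $s\,\in\,\HH^0(E)$ and $t\,\in\,\HH^0(E^\vee)$ with $c\,:=\,\theta(s,t)\,\neq\,0$, and rescale $t$ so that $c\,=\,1$; then $t\circ s\,=\,\mathrm{id}_{\Oh_X}$. Thus $s\,:\,\Oh_X\,\longrightarrow\, E$ is a split monomorphism with retraction $t$. The key construction is the endomorphism $p\,:=\,s\circ t\,\in\,\mathrm{End}(E)$, which satisfies $p^2\,=\,s\circ(t\circ s)\circ t\,=\,s\circ t\,=\,p$, so $p$ is idempotent. An idempotent endomorphism of a locally free sheaf splits it as $E\,=\,\ker p\,\oplus\,\mathrm{im}\,p$ with both summands locally free, and $s,t$ identify $\mathrm{im}\,p$ with $\Oh_X$. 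Setting $E'\,:=\,\ker p$ then gives $E\,\isom\, E'\oplus\Oh_X$, as desired.

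The only genuine content, and hence the place to be careful, is the identification of $\theta$ with the composition map $\Oh_X\,\longrightarrow\, E\,\longrightarrow\,\Oh_X$ together with the fact that a non-zero element of $\HH^0(\Oh_X)\,=\,k$ is invertible; this is what turns the retraction $t$ from a mere left inverse on global sections into an honest bundle retraction, and it relies on $X$ being connected and proper. Granting this, the passage from a split injection to a direct-sum decomposition is formal via the idempotent $p$, so I expect no further obstacle. I would simply remark at the end that connectedness of $X$ is what guarantees $\HH^0(\Oh_X)\,=\,k$, closing the argument.
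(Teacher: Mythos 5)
Your proposal is correct and takes essentially the same route as the paper: interpret the sections $s$ and $t$ with $\theta(s,t)\neq 0$ as morphisms $\Oh_X \by{s} E \by{t} \Oh_X$ whose composite is a non-zero element of $\HH^0(\Oh_X)=k$, hence an isomorphism after rescaling, so that $\Oh_X$ splits off. The only difference is that you make explicit the final step via the idempotent $p=s\circ t$ and the decomposition $E=\ker p\oplus \mathrm{im}\, p$, which the paper compresses into ``this yields the required splitting.''
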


\begin{proof}
Take $s\,\in\, \HH^0(E)$ and $s^\vee\,\in\,\HH^0(E^\vee)$ such that
$\theta(s\otimes s^\vee) \,\neq\, 0$.
These give nonzero homomorphisms $s^\vee\,:\, E \,\longrightarrow\, \Oh_X $ and $s\,:\,\Oh_X
\,\longrightarrow\, E$ such that the composition of maps
$s^\vee\circ s\,:\, \Oh_X\,\longrightarrow\, \Oh_X$ is non-zero, and hence
$s^\vee\circ s$ is an isomorphism. This yields the required splitting.
\end{proof}

\begin{remark}
In practice, finding reasonably general conditions under which {\it both} $\rho_1$
and $\rho_2$ satisfy the hypotheses in Lemma \ref{splitting_criterion} can be challenging.
\end{remark}

\section{An infinitesimal Lefschetz theorem for Hodge classes of an ACM bundle}

As a prelude to our main result, we will establish, in Theorem \ref{inlt}, an {\it 
infinitesimal Lefschetz theorem} for Hodge classes of an ACM bundle on an even dimensional 
smooth projective hypersurface. The main tool here is a formalism, introduced by N. Mohan 
Kumar and V. Srinivas (unpublished), to prove the Noether-Lefschetz theorem for surfaces 
in $\bbP^3$. An expository account of this can be found in \cite{R}. Applications of this method 
are in \cite{KJ}, \cite{KRR2} and \cite{RS}.

We start by setting up some notation that will be employed in this section.
 
As before, $k$ is an algebraically closed field of characteristic zero.
Set $\bbP\,:=\,\bbP^{N}_k$ and
$W_d\,=\,\HH^0(\bbP,\,\Oh_{\bbP}(d))$. Let $S\,:=\,\bbP(W_d^{\ast})$ denote the
parameter space of all degree $d$ hypersurfaces in $\bbP$. We have the
short exact sequence
$$0\,\longrightarrow\,\mathcal{V}_d\,\longrightarrow\, W_d\tensor\Oh_{\bbP}\,=\,\HH^0(\bbP,\,
\Oh_{\bbP}(d))\tensor\Oh_{\bbP}\,\longrightarrow\, \Oh_{\bbP}(d) \,\longrightarrow\, 0,$$
where $\mathcal{V}_d$ is the kernel of the
evaluation map $W_d\tensor\Oh_{\bbP}\,\longrightarrow\, \Oh_{\bbP}(d)$. Then
\begin{equation}\label{cx}
\mathcal{X}\,:=\,\bbP(\mathcal{V}_d^{\ast})\,\longrightarrow\, S
\end{equation}
is the
universal family of all degree $d$ hypersurfaces. Let $X\,\subset\,\bbP$
be the smooth degree $d$ hypersurface corresponding to a closed point
$y\,\in\, S$. 

We have the standard exact sequence
$$0\,\longrightarrow\, \Oh_{\bbP}\,\longrightarrow\, \Oh_{\bbP}(d)
\,\longrightarrow\, \Oh_X(d) \,\longrightarrow\, 0.$$
The corresponding long exact sequence of cohomologies gives
$$ 0 \,\longrightarrow\, k \,\longrightarrow\, W_d \,\longrightarrow\, V \,\longrightarrow\, 0,$$
where
\begin{equation}\label{dv}
V\,:=\,\HH^0(X,\, \Oh_X(d)).
\end{equation}
Then $V \,\isom\, T_{S,y}$, the Zariski tangent space to
$S$ at the point $y$. Consider the standard decomposition
$$A\,:=\,\Oh_{S,y}/\mathfrak{m}_y^2\,=\,k\oplus
V^{\vee},$$ so that $\Omega^1_A\tensor{k}\,=\,
\HH^0(\Omega^1_{\Spec{A}})\,\isom\, V^{\vee}$. We have the inclusion maps
$$\{y\} \,\into \,\Spec{A} \,\into\, S\, ;$$
note that $\Spec{A}$ is the first order thickening of the smooth 
point $\{y\}\,\subset\, S$. Let
$$X_{\epsilon}\,:=\,\Spec{A}\times_S\mathcal{X}$$ 
denote the universal hypersurface over $\Spec{A}$, where
$\mathcal{X}$ is defined in \eqref{cx}. So $X_\epsilon$ is
the first order thickening of $X$ as a subscheme of $\mathcal{X}$.

Set $\bbP_A\,:=\,\bbP\times \Spec{A}$; let $p\,:\,\bbP_A\,\longrightarrow\, \bbP$
and $q\,:\,\bbP_A\,\longrightarrow\,\Spec{A}$ denote the two natural projections.
Consider the
conormal sheaf sequence for the inclusion map $\iota_{\epsilon}\,:\,X_{\epsilon}\,\into\,
\bbP_A$
$$\Oh_{X_{\epsilon}}(-d)\,\longrightarrow\, \Omega^1_{\bbP_A}\tensor
\Oh_{X_{\epsilon}}\,=\, p^{\ast}\Omega^1_{\bbP}\tensor\Oh_{X_{\epsilon}} \oplus
q^*\Omega^1_A\tensor\Oh_{X_{\epsilon}}\,\longrightarrow\, \Omega^1_{X_{\epsilon}}
\,\longrightarrow\,0.$$ 
Restricting this sequence to $X$ yields the following:
 $$\Oh_{X}(-d)\,\stackrel{(\alpha, \beta)}{\longrightarrow}\, \, \Omega^1_{\bbP}\tensor\Oh_{X} \oplus
V^\vee\tensor\Oh_{X}\,\stackrel{(\gamma, \delta)}\longrightarrow\, \Omega^1_{X_{\epsilon}}\tensor\Oh_X
\,\longrightarrow\,0.$$ 
Rewriting, we obtain the following
\begin{lemma}[{Mohan Kumar-Srinivas, \cite{MS}}]
There is a commutative diagram
\begin{equation}
\begin{diagram}{ccccccccc}\label{basicdgm}
0 & \longrightarrow & \Oh_X(-d) & \xrightarrow{\,\ \alpha\,\ } & \Omega^1_{\bbP}\tensor\Oh_X &
\longrightarrow & \Omega^1_X & \longrightarrow & 0\\
&&\,\,\,\Big\downarrow{-\beta} &&\,\,\,\Big\downarrow{\gamma} & &\Big\Vert &&\\
0 & \longrightarrow & V^{\vee}\tensor\Oh_X & \xrightarrow{\,\ \delta\,\ } & 
\Omega^1_{X_{\epsilon}}\tensor\Oh_X
& \longrightarrow & \Omega^1_X &\longrightarrow & 0\\
\end{diagram}
\end{equation}
where the top and bottom rows come from the inclusions $X\,\subset\,\bbP$ and
$X\,\subset \,X_{\epsilon}$ respectively, while the homomorphism $\beta$ is
the dual of the evaluation map.
\end{lemma}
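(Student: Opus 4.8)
The plan is to recognize the asserted diagram as a bookkeeping device that merely repackages the conormal (second fundamental) sequence of the single inclusion $\iota_\epsilon\,:\,X_\epsilon\,\into\,\bbP_A$ displayed immediately before the statement, together with the product splitting $\Omega^1_{\bbP_A}\,=\,p^\ast\Omega^1_\bbP\oplus q^\ast\Omega^1_A$. First I would fix a local presentation of the universal equation: near $y$ the divisor $X_\epsilon$ is cut out in $\bbP_A$ by $\mathcal{F}\,=\,F+\sum_i\epsilon_i G_i$, where $F$ defines $X$ and the classes of $G_1,\dots,G_r$ form a basis of $V\,=\,W_d/k\cdot F\,\isom\,T_{S,y}$, with $\epsilon_i$ the dual coordinates on $\Spec A$. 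Since $X_\epsilon\into\bbP_A$ is a relative Cartier divisor of relative degree $d$, its conormal sheaf is $\Oh_{X_\epsilon}(-d)$ and the conormal map sends $1\mapsto d\mathcal{F}$. Restricting to $X$, i.e.\ setting $\epsilon_i=0$, gives $d\mathcal{F}|_X\,=\,dF|_X+\sum_i (G_i|_X)\,d\epsilon_i$; under the splitting this is exactly $(\alpha,\beta)$, with $\alpha$ the usual conormal map $1\mapsto dF|_X$ and $\beta\,=\,\sum_i (G_i|_X)\tensor d\epsilon_i$. Tensoring $\beta$ by $\Oh_X(d)$ and taking sections identifies it, up to the canonical isomorphisms, with the restriction $V\to\HH^0(X,\,\Oh_X(d))\,=\,V$ induced by the evaluation map $W_d\tensor\Oh_\bbP\to\Oh_\bbP(d)$; this is the asserted description of $\beta$ as the dual of evaluation.

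With the maps pinned down, I would next exhibit the two rows as honest short exact sequences. The top row is the conormal sequence of the smooth degree-$d$ hypersurface $X\subset\bbP$, so $\alpha$ is injective with cokernel $\Omega^1_X$. For the bottom row I would observe that $X\into X_\epsilon$ is the base change of $\{y\}\into\Spec A$ along the smooth projection $q|_{X_\epsilon}\,:\,X_\epsilon\to\Spec A$; since $A\,=\,k\oplus V^\vee$, the conormal sheaf of $\{y\}$ in $\Spec A$ is $\mathfrak{m}_y/\mathfrak{m}_y^2\,=\,V^\vee$, whence $\mathcal{N}^\vee_{X/X_\epsilon}\,=\,V^\vee\tensor\Oh_X$ and the bottom row is its conormal sequence. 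Because $X$ is smooth and this immersion is (quasi-)regular, the sequence is locally split, so $\delta$ is injective with cokernel $\Omega^1_X$; moreover this $\delta$ agrees with the second component of the middle map in the restricted sequence, since both send the class of $\epsilon_i$ to the class of $d\epsilon_i$ in $\Omega^1_{X_\epsilon}\tensor\Oh_X$.

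Finally I would verify commutativity. The right-hand square and the identification of both cokernels with $\Omega^1_X$ are formal: the composites $X\into\bbP\into\bbP_A$ and $X\into X_\epsilon\into\bbP_A$ are the same closed immersion, so the two cotangent surjections onto $\Omega^1_X$ coincide, and $\gamma$ is the map between the middle terms induced by $p$. The left-hand square is where the sign enters, and I expect this to be the only genuinely delicate point: the restricted conormal sequence is a complex, so its consecutive maps compose to zero, giving $\gamma\circ\alpha+\delta\circ\beta=0$, that is, $\gamma\circ\alpha=\delta\circ(-\beta)$, which is precisely the commutativity expressed by the vertical arrow $-\beta$. The main obstacle is therefore bookkeeping rather than geometry: one must keep the product splitting, the two conormal identifications, and the orientation of the connecting maps mutually consistent so that the forced sign lands on $\beta$, while everything else follows from the standard exactness and functoriality of conormal sequences.
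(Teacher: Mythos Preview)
Your proposal is correct and follows exactly the paper's approach: the paper's entire argument is the paragraph preceding the lemma, which displays the conormal sequence of $X_\epsilon\subset\bbP_A$, uses the product splitting $\Omega^1_{\bbP_A}=p^\ast\Omega^1_\bbP\oplus q^\ast\Omega^1_A$, restricts to $X$ to obtain the $(\alpha,\beta)$--$(\gamma,\delta)$ sequence, and then simply says ``Rewriting, we obtain the following.'' You have supplied precisely the details behind that word ``rewriting'': identifying the rows as the two conormal sequences, checking exactness on the left for the bottom row, and extracting the left-square commutativity (with its sign) from $\gamma\alpha+\delta\beta=0$.
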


Taking the $\ell$-th exterior power of \eqref{basicdgm} we obtain a commutative diagram
\begin{equation}
\begin{diagram}{ccccccccc}\label{cses}
0 & \longrightarrow & \Omega^{\ell-1}_X(-d) &\longrightarrow & \Omega^\ell_{\bbP}\tensor\Oh_X
&\longrightarrow & \Omega^\ell_X &\longrightarrow & 0 \\ 
& & \Big\downarrow & & \Big\downarrow & & \Big\Vert & & \\ 
0 & \longrightarrow & \Omega(\ell) &\longrightarrow & \Omega^\ell_{X_{\epsilon}}\tensor\Oh_X 
&\longrightarrow & \Omega^\ell_X &\longrightarrow & 0
\end{diagram}
\end{equation}
(see \cite[page 126, 5.16(d)]{Ha}), where $\Omega(\ell)$ comes
equipped with a decreasing filtration $\FF^{\ast}\Omega(\ell)$
for which
\begin{itemize}
\item $\FF^1(\Omega(\ell))\,=\,\Omega(\ell)$,
\item $\FF^{\ell+1}(\Omega(\ell))\,=\,0$, and
\item $\gr_{\FF}^j\Omega(\ell)\,:=\,\FF^j(\Omega(\ell))/\FF^{j+1}(\Omega(\ell))\,
=\, \Lambda^jV^{\vee}\tensor\Omega^{\ell-j}_X$ for all $j\,\geq\, 1$.
\end{itemize}

\begin{lemma}\label{omegacohomology}
For a smooth hypersurface $X\,\subset\,\bbP$ and an ACM bundle $E$ on $X$,
$$\HH^n(X,\,E\tensor\Omega^{n-j}_X)\,=\, 0\, = \, \HH^n(X,\,E(-d)\tensor\Omega^{n-j}_X)$$
for all $j\,\geq\, 1$, where $n\,:=\,\lfloor{\dim{X}/2}\rfloor$
(so $\dim{X}\,=\, 2n$ or $2n+1$).
\end{lemma}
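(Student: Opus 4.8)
The plan is to prove the two vanishing statements
$\HH^n(X,\,E\tensor\Omega^{n-j}_X)=0$ and $\HH^n(X,\,E(-d)\tensor\Omega^{n-j}_X)=0$ for all $j\geq 1$ by descending induction on the exterior power $\ell=n-j$, exploiting the top (geometric) row of the commutative diagram \eqref{cses} together with the ACM hypothesis on $E$. First I would tensor the top row of \eqref{cses} by $E$ (resp.\ by $E(-d)$), obtaining the short exact sequence
\[
0\,\longrightarrow\,E\tensor\Omega^{\ell-1}_X(-d)\,\longrightarrow\,E\tensor\Omega^\ell_{\bbP}\tensor\Oh_X\,\longrightarrow\,E\tensor\Omega^\ell_X\,\longrightarrow\,0,
\]
and take its long exact cohomology sequence. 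The middle term $E\tensor\Omega^\ell_{\bbP}\tensor\Oh_X$ should be controlled directly: I would resolve $\Omega^\ell_{\bbP}$ by the Koszul–Euler complex \eqref{koszuleuler}, which expresses it (after restricting to $X$) in terms of $\Oh_\bbP(-m)|_X$ twists, and then invoke that $E$ is ACM—so that $\HH^i_\ast(X,E)=0$ for $0<i<\dim X$—to kill the relevant middle-degree cohomology of the twists $E(-m)$. This reduces the vanishing of $\HH^n$ of the middle term to a statement about the Koszul resolution, which is where the intermediate-cohomology hypothesis does the real work.

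The key step is to organize the induction so that the connecting maps relate $\HH^n(E\tensor\Omega^\ell_X)$ to $\HH^n(E\tensor\Omega^{\ell-1}_X(-d))$ and $\HH^{n+1}(E\tensor\Omega^{\ell-1}_X(-d))$. I would begin the induction at the top exterior power and descend: for $\ell=n$ the bundle $\Omega^n_X$ together with the middle-term vanishing gives the base case, and at each subsequent step the long exact sequence lets me deduce the $\HH^n$-vanishing for $\Omega^\ell_X$ from the already-established vanishing for lower values of $\ell$ (with the degree shift $(-d)$ bookkept carefully). Because the statement asserts vanishing of $\HH^n$ for \emph{both} $E$ and $E(-d)$ and for \emph{all} $j\geq 1$, I would run the two cases $E$ and $E(-d)$ in tandem, since each appears in the other's connecting sequence through the $(-d)$ twist; keeping both vanishing families alive simultaneously is what makes the induction close.

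The main obstacle I anticipate is the middle cohomology bookkeeping in the Koszul–Euler complex: after restricting \eqref{koszuleuler} to $X$ and tensoring with $E$, the hypercohomology spectral sequence (or the iterated connecting-map argument breaking the long Koszul complex into short exact pieces) produces a range of cohomological degrees, and one must check that every term that could contribute to $\HH^n$ of $E\tensor\Omega^\ell_{\bbP}|_X$ indeed falls strictly inside the ACM range $0<i<\dim X$, so that it is annihilated. This requires tracking how the index $n=\lfloor \dim X/2\rfloor$ interacts with the length of the Koszul complex and with the two possibilities $\dim X=2n$ or $2n+1$; the parity split is exactly where I expect the degree inequalities to be tightest and the argument to demand the most care.
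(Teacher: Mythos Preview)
Your overall strategy—use the cotangent-sequence filtration (the top row of \eqref{cses}) and control the middle term $E\tensor\Omega^\ell_{\bbP|X}$ separately—is exactly what the paper does. But the induction scheme you describe has two real problems.

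First, the base case $\ell=n$ is not available: the lemma asserts nothing about $\HH^n(X,\,E\tensor\Omega^n_X)$, and this group is in general nonzero—it is precisely the Hodge cohomology group the entire paper is about.

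Second, the induction does not close at cohomological degree $n$. From the long exact sequence of
\[
0\longrightarrow E\tensor\Omega^{\ell-1}_X(-d)\longrightarrow E\tensor\Omega^\ell_{\bbP|X}\longrightarrow E\tensor\Omega^\ell_X\longrightarrow 0,
\]
the vanishing of $\HH^n(E\tensor\Omega^\ell_X)$ requires the vanishing of $\HH^{n+1}(E\tensor\Omega^{\ell-1}_X(-d))$, not $\HH^n$; iterating, one actually needs $\HH^{n+a}(E(-ad)\tensor\Omega^{\ell-a}_X)=0$ for all $0\le a\le \ell$. Running $E$ and $E(-d)$ in tandem addresses the $(-d)$ twist but not the climbing cohomological index. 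The paper avoids this by passing at once to the full Koszul complex \eqref{koszulctbs} associated to the cotangent sequence, which reduces the claim directly to (a) $\HH^{n+\ell}(E(-\ell d))=0$, immediate from ACM since $n+\ell<\dim X$, and (b) $\HH^{n+a}(E\tensor\Omega^{\ell-a}_{\bbP|X}(-ad))=0$ for $0\le a<\ell$.

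For (b) the paper takes a different route from yours: rather than resolving $\Omega^{\ell-a}_\bbP$ via the Euler--Koszul complex and invoking ACM on $E$, it tensors the minimal resolution \eqref{minres} of $E$ with $\Omega^{\ell-a}_\bbP(-ad)$ and applies Bott's formula on $\bbP$. Your alternative also works—one checks that the relevant cohomological indices $n+a-k$ for $0\le k\le \ell-a$ all satisfy $0<n+a-k<\dim X$—but only once you repair the reduction step, either by strengthening the inductive hypothesis to cover all the $\HH^{n+a}$ groups, or by packaging the iteration into the single Koszul complex \eqref{koszulctbs} as the paper does.
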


\begin{proof}
Associated to the cotangent bundle sequence
\begin{equation}\label{ctbs}
0\,\longrightarrow\, \Oh_X(-d)\,\longrightarrow\, \Omega^1_{\bbP_|X}
\,\longrightarrow\, \Omega^1_X\,\longrightarrow\, 0
\end{equation}
we have the Koszul complex
\begin{equation}\label{koszulctbs}
0\,\longrightarrow\, \Oh_X(-(n-j)d)\,\longrightarrow\, \Omega^1_{\bbP|X}(-(n-j-1)d)
\,\longrightarrow\, \cdots
\end{equation}
$$
\cdots \,\longrightarrow\,\Omega^{n-j-1}_{\bbP|X}(-d)\,\longrightarrow\,
\Omega^{n-j}_{\bbP|X} \,\longrightarrow\, \Omega^{n-j}_X\,\longrightarrow\, 0.
$$

Tensoring \eqref{koszulctbs} with $E$ and taking the long exact sequence
of cohomologies, we see that it is enough to prove
the following for all $j\,\geq\, 1$:
\begin{enumerate}
\item[(a)] $\HH^{2n-j}(X,\, E(-(n-j)d)\,=\,0$, and
\item[(b)] $\HH^{n+a}(X,\, E\tensor\Omega^{n-j-a}_{\bbP_|X}(-ad))\,=\,0$ for $0\leq a < n-j$.
\end{enumerate}

Statement (a) is clear from the fact that $E$ is ACM, and $2n-j \,<\, \dim{X}
\,=\, 2n$ or $2n+1$.

Statement (b) can be verified as follows: Tensoring the minimal resolution
\eqref{minres} of $E$ with $\Omega^{n-j-a}_{\bbP}(-ad)$, we get that
\begin{equation}\label{minresOmega}
0\,\longrightarrow\, \widetilde{F}_1\tensor\Omega^{n-j-a}_{\bbP}(-ad) \,
\stackrel{\Phi}{\longrightarrow}\,
\widetilde{F}_0\tensor\Omega^{n-j-a}_{\bbP}(-ad)\,\longrightarrow\,
E\tensor\Omega^{n-j-a}_{\bbP}(-ad)\,\longrightarrow\, 0.
\end{equation}
The long exact sequence of cohomologies associated to
\eqref{minresOmega} produces the exact sequence
\begin{equation}\label{be}
\HH^{n+a}(\bbP,\, \widetilde{F}_0\tensor\Omega^{n-j-a}_{\bbP}(-ad))\,
\longrightarrow\, \HH^{n+a}(X,\, E\tensor\Omega^{n-j-a}_{\bbP_|X}(-ad))\,$$
$$ \longrightarrow\, \HH^{n+a+1}(\bbP,\, \widetilde{F}_1\tensor\Omega^{n-j-a}_{\bbP}(-ad)).
\end{equation}
Since $1\,\leq\, n-j-a\,\leq\, n-1$ and $n+a\, < \,2n-j\,< \,\dim{X}$, the two extreme terms
in \eqref{be} vanish by Bott's formula. The proof of the second vanishing also follows along the same lines.
\end{proof}

The following is a consequence of Lemma \ref{omegacohomology}.

\begin{cor}\label{Omegacohomology}
For a smooth, degree $d$ hypersurface $X\,\subset\,\bbP^{2n+1}$ or $X\, \subset \,\bbP^{2n+2}$ and an ACM bundle $E$ on $X$,
$$\HH^n(X,\,E\tensor\Omega(n))\,=\,0 \,= \, \HH^n(X,\,E(-d)\tensor\Omega(n)),$$ 
where $\Omega(n)$ is the filtered bundle in \eqref{cses}.
\end{cor}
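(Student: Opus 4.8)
The goal is to prove Corollary \ref{Omegacohomology}, namely that $\HH^n(X,\, E\tensor\Omega(n))\,=\,0\,=\,\HH^n(X,\, E(-d)\tensor\Omega(n))$, where $\Omega(n)$ is the filtered bundle appearing in the exact sequence \eqref{cses}. The plan is to exploit the finite decreasing filtration $\FF^{\ast}\Omega(n)$ on $\Omega(n)$ described just after \eqref{cses}, reducing the vanishing of cohomology on the total bundle to the vanishing of cohomology on each successive graded piece.

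Concretely, I would first recall that the filtration satisfies $\FF^1(\Omega(n))\,=\,\Omega(n)$ and $\FF^{n+1}(\Omega(n))\,=\,0$, so that there are only finitely many nontrivial steps $\FF^1\supset\FF^2\supset\cdots\supset\FF^n\supset\FF^{n+1}=0$. For each $j$ with $1\,\leq\, j\,\leq\, n$ there is a short exact sequence
$$
0\,\longrightarrow\, \FF^{j+1}(\Omega(n))\,\longrightarrow\, \FF^{j}(\Omega(n))\,\longrightarrow\, \gr_{\FF}^{j}\Omega(n)\,\longrightarrow\, 0,
$$
where the quotient is identified as $\gr_{\FF}^{j}\Omega(n)\,=\,\Lambda^{j}V^{\vee}\tensor\Omega^{n-j}_X$. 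Tensoring each such sequence with the bundle $E$ (respectively with $E(-d)$) and passing to the associated long exact sequence of cohomology, the degree-$n$ cohomology of each graded piece is
$$
\HH^n(X,\, E\tensor\gr_{\FF}^{j}\Omega(n))\,=\,\Lambda^{j}V^{\vee}\tensor\HH^n(X,\, E\tensor\Omega^{n-j}_X),
$$
since $\Lambda^{j}V^{\vee}$ is just a finite-dimensional vector space and can be pulled out of the cohomology.

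The key input is Lemma \ref{omegacohomology}, which gives $\HH^n(X,\, E\tensor\Omega^{n-j}_X)\,=\,0\,=\,\HH^n(X,\, E(-d)\tensor\Omega^{n-j}_X)$ for every $j\,\geq\, 1$. Therefore the degree-$n$ cohomology of every graded piece $\gr_{\FF}^{j}\Omega(n)$ vanishes (after tensoring with $E$ or with $E(-d)$). I would then run a downward induction on $j$ from $n$ to $1$: the base case is $\HH^n(X,\, E\tensor\FF^{n}(\Omega(n)))\,=\,\HH^n(X,\, E\tensor\gr_{\FF}^{n}\Omega(n))\,=\,0$ because $\FF^{n+1}=0$; and the inductive step uses the portion
$$
\HH^n(X,\, E\tensor\FF^{j+1}(\Omega(n)))\,\longrightarrow\, \HH^n(X,\, E\tensor\FF^{j}(\Omega(n)))\,\longrightarrow\, \HH^n(X,\, E\tensor\gr_{\FF}^{j}\Omega(n))
$$
of the long exact sequence, in which both flanking terms vanish (the left by the inductive hypothesis, the right by Lemma \ref{omegacohomology}), forcing the middle term to vanish as well. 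Setting $j\,=\,1$ and using $\FF^{1}(\Omega(n))\,=\,\Omega(n)$ yields the desired conclusion. The identical argument applied to $E(-d)$ in place of $E$ gives the second vanishing statement.

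There is no serious obstacle here; the statement is essentially a formal consequence of having a finite filtration all of whose graded quotients have vanishing degree-$n$ cohomology. The only point requiring minor care is bookkeeping the indices of the filtration correctly and confirming that the graded quotient $\gr_{\FF}^{j}\Omega(n)\,=\,\Lambda^{j}V^{\vee}\tensor\Omega^{n-j}_X$ indeed always has $n-j\,\geq\, 1$ in the relevant range, so that Lemma \ref{omegacohomology} (which is stated for $j\,\geq\, 1$, i.e. for exponents $\Omega^{n-j}_X$ with $n-j$ strictly less than $n$) applies to each piece. Since the filtration runs over $1\,\leq\, j\,\leq\, n$, every graded piece has the form $\Omega^{n-j}_X$ with $j\,\geq\, 1$, which is exactly the hypothesis of Lemma \ref{omegacohomology}, and the induction goes through without difficulty.
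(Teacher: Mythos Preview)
Your argument is correct and is essentially the same filtration-and-d\'evissage argument the paper gives: both you and the paper use the short exact sequences for the successive filtration steps, invoke Lemma~\ref{omegacohomology} to kill $\HH^n$ of each graded piece, and descend from the top step $\FF^n(\Omega(n))=\Lambda^n V^\vee\tensor\Oh_X$ (whose $\HH^n$ vanishes since $E$ is ACM) down to $\FF^1=\Omega(n)$. One tiny slip: you write that one needs $n-j\geq 1$, but in fact $j=n$ (so $n-j=0$) occurs as the base case and is still covered by Lemma~\ref{omegacohomology}, which is stated for all $j\geq 1$; you effectively acknowledge this in the next sentence, so no harm is done.
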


\begin{proof}
Consider the exact sequences
\begin{equation}\label{ks1}
0\,\longrightarrow\, E\tensor{\FF}^{j+1}(\Omega(n))\,\longrightarrow\,
E\tensor{\FF}^j(\Omega(n)) \,\longrightarrow\,
E\tensor\gr_{\FF}^j\Omega
\end{equation}
$$
=\, E\tensor\Lambda^jV^{\ast}\tensor\Omega^{n-j}_X \,\longrightarrow\, 0.
$$
In view of Lemma \ref{omegacohomology}, the long exact sequence
of cohomologies associated to \eqref{ks1} produces a surjection
$$\HH^n(X,\,E\tensor{\FF}^{j+1}(\Omega(n))) \,\onto\,
\HH^n(X,\,E\tensor{\FF}^{j}(\Omega(n)))\ \ \, \forall \, \ j\,\geq\, 1.$$
Since 
$\HH^n(X,\,E\tensor\FF^{n}\Omega(n))\,=\,\HH^n(X,\,E\tensor\Lambda^nV^{\ast}\tensor\Oh_X)\,=\,0$,
we obtain the first vanishing. The proof of the second vanishing is along the same lines.
\end{proof}

Tensoring the Koszul complex
\begin{equation}\label{koszulctbs1}
0\,\longrightarrow\, \Oh_X(-(n-1)d)\,\longrightarrow\, \Omega^1_{\bbP|X}(-(n-2)d)
\,\longrightarrow\, \cdots
\end{equation}
$$
\cdots \,\longrightarrow\,\Omega^{n-2}_{\bbP|X}(-d)\,\longrightarrow\,
\Omega^{n-1}_{\bbP|X} \,\longrightarrow\, \Omega^{n-1}_X\,\longrightarrow\, 0.
$$
with $E(-d)$ and $E$ respectively, and taking the long exact sequence of cohomologies
we get coboundary maps
\begin{equation}\label{cob1}
\HH^{n+1}(X,\,E\tensor\Omega^{n-1}_X(-d))\,\longrightarrow\,\HH^{2n}(X,\,E(-nd)), 
\end{equation}
\begin{equation}\label{cob2}
\HH^{n+1}(X,\,E\tensor\Omega^{n-1}_X)\,\longrightarrow\, \HH^{2n}(X,\, E((-n+1)d)).
\end{equation}

\begin{lemma}\label{topcoh}
Let $X\,\subset\,\bbP^{2n+1}$, or $X\,\subset\, \bbP^{2n+2}$, 
be a smooth hypersurface of dimension at least two, and $E$ an ACM bundle on $X$. 
Then the coboundary maps in \eqref{cob1} and \eqref{cob2} are injections. 
\end{lemma}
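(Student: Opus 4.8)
The plan is to handle \eqref{cob1} and \eqref{cob2} simultaneously, since they are the coboundary maps attached to the Koszul complex \eqref{koszulctbs1} after tensoring with $E(-d)$ and with $E$ respectively. In each case this complex is a locally free resolution of its rightmost term, and the coboundary map in question is the iterated connecting homomorphism running from the cohomology of that rightmost sheaf up to the shifted cohomology of the leftmost one. So I would first record that this iterated coboundary is a composite of $n-1$ ordinary connecting homomorphisms, one for each short exact sequence obtained by splitting the resolution at its syzygies, and that a composite of injective maps is injective. (For $n\,=\,1$ the complex has no interior terms and the coboundary is an isomorphism, so I may assume $n\,\geq\, 2$.)

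Concretely, for \eqref{cob2} I would set $Z_j\,:=\,E\tensor\Omega^{n-1-j}_X(-jd)$ and $K_j\,:=\,E\tensor\Omega^{n-1-j}_{\bbP|X}(-jd)$ for $0\,\leq\, j\,\leq\, n-1$, so that $Z_0\,=\,E\tensor\Omega^{n-1}_X$ is the source while $Z_{n-1}\,=\,K_{n-1}\,=\,E(-(n-1)d)$ is the target. The $(n-1-j)$-th wedge power of the conormal sequence \eqref{ctbs}, tensored with $E(-jd)$, furnishes the short exact sequence
$$0\,\longrightarrow\, Z_{j+1}\,\longrightarrow\, K_j\,\longrightarrow\, Z_j\,\longrightarrow\, 0,$$
and the map \eqref{cob2} is the composite of the connecting homomorphisms $\partial_j\,:\,\HH^{n+1+j}(X,\,Z_j)\,\longrightarrow\,\HH^{n+2+j}(X,\,Z_{j+1})$ for $j\,=\,0,\,\ldots,\,n-2$. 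From the long exact sequence of the displayed short exact sequence, $\partial_j$ is injective as soon as $\HH^{n+1+j}(X,\,K_j)\,=\,0$. The case \eqref{cob1} is the same argument with $E$ replaced throughout by $E(-d)$, so it reduces to the vanishing of $\HH^{n+1+j}(X,\,E\tensor\Omega^{n-1-j}_{\bbP|X}(-(j+1)d))$.

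It then remains to prove, for $0\,\leq\, j\,\leq\, n-2$, that $\HH^{n+1+j}(X,\,E\tensor\Omega^{n-1-j}_{\bbP|X}(-cd))\,=\,0$ for $c\,=\,j$ and $c\,=\,j+1$. I would establish this exactly as in the proof of Lemma \ref{omegacohomology}: tensor the minimal resolution \eqref{minres} of $E$ with $\Omega^{n-1-j}_{\bbP}(-cd)$ and pass to the long exact sequence, which sandwiches the group between $\HH^{n+1+j}(\bbP,\,\widetilde{F}_0\tensor\Omega^{n-1-j}_{\bbP}(-cd))$ and $\HH^{n+2+j}(\bbP,\,\widetilde{F}_1\tensor\Omega^{n-1-j}_{\bbP}(-cd))$. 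Writing $p\,=\,n-1-j\,\in\,\{1,\,\ldots,\,n-1\}$, the two relevant cohomological degrees are $q\,=\,n+1+j$ and $q+1\,=\,n+2+j$; since $q-p\,=\,2+2j\,\geq\, 2$, neither degree equals $p$, and since $q+1\,\leq\, 2n\,<\, 2n+1\,\leq\, N$ both lie strictly between $0$ and $N$. Hence every such cohomology group of a twist of $\Omega^p_{\bbP}$ vanishes by Bott's formula, yielding the required vanishing.

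The step that I expect to demand the most care is the degree bookkeeping in the final paragraph: one must verify that for every $j$ in range the two cohomological degrees $n+1+j$ and $n+2+j$ stay strictly below $N\,=\,\dim X+1$ (this is precisely where the constraint $j\,\leq\, n-2$ and the two admissible values of $\dim X$ enter) and strictly above the Hodge index $p\,=\,n-1-j$, so that Bott's formula forces the vanishing for all twists appearing in $\widetilde{F}_0$ and $\widetilde{F}_1$. Once these inequalities are in place, the splitting of the Koszul complex at its syzygies and the identification of \eqref{cob1} and \eqref{cob2} with composites of connecting homomorphisms are routine.
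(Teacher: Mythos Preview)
Your proposal is correct and follows essentially the same route as the paper: both arguments split the Koszul complex \eqref{koszulctbs1} (tensored with $E$ or $E(-d)$) into short exact sequences, reduce the injectivity of the iterated coboundary to the vanishing of $\HH^{n+1+a}(X,\,E\tensor\Omega^{n-1-a}_{\bbP|X}(-ad))$ and $\HH^{n+1+a}(X,\,E\tensor\Omega^{n-1-a}_{\bbP|X}(-(a+1)d))$ for $0\le a\le n-2$, and then obtain these vanishings from the minimal resolution \eqref{minres} together with Bott's formula. Your write-up is in fact somewhat more explicit than the paper's about the syzygy sheaves $Z_j$ and the index check for Bott, but there is no substantive difference in strategy.
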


\begin{proof}
We prove the case when $X$ has even dimension. The proof for odd dimensional hypersurfaces
of dimension at least three is identical.

When $n\,=\,1$, there is nothing to prove. So we assume that $n\geq 2$.
We argue as in the proof of Lemma \ref{omegacohomology}. 
From the Koszul complex \eqref{koszulctbs1} tensored with $E$ and $E(-d)$ respectively, 
it is enough to show that 
\begin{enumerate}
\item[(i)] $\HH^{n+1+a}(X,\, E\tensor\Omega^{n-1-a}_{\bbP_|X}(-ad))\,=\,0$ for $0\,\leq\,
a \,\leq \,n-2$, and
\item[(ii)] $\HH^{n+1+a}(X,\, E\tensor\Omega^{n-1-a}_{\bbP_|X}(-(a+1)d))\,=\,0$ for
$0\,\leq\, a \,\leq\, n-2$.
\end{enumerate}

Tensoring the minimal resolution
\eqref{minres} of $E$ with $\Omega^{n-1-a}_{\bbP}(-ad)$, and taking
the corresponding long exact sequence of cohomologies, we get the exact sequence
$$\HH^{n+1+a}(\bbP,\, \widetilde{F}_0\tensor\Omega^{n-1-a}_{\bbP}(-ad))\,
\longrightarrow\, \HH^{n+1+a}(X,\, E\tensor\Omega^{n-1-a}_{\bbP_|X}(-ad))\,$$
$$\longrightarrow\, \HH^{n+2+a}(\bbP,\, \widetilde{F}_1\tensor\Omega^{n-1-a}_{\bbP}(-ad)).$$
For the given values of $a$ and $n$, the two extreme terms vanish by Bott's formula.
This proves $(i)$. The proof of $(ii)$ is identical.
\end{proof}

\begin{prop}\label{rho1_odd}
Let $X$ be a smooth hypersurface and $E$ an ACM vector bundle on $X$.
Then the restriction map 
$$\rho\,:\, \HH^n(X,\, E\tensor\Omega^n_\bbP)\,\longrightarrow\, \HH^n(X,\, E\tensor\Omega^n_X)$$
is an injection when $\dim{X}\,=\,2n$, and an isomorphism when $\dim{X}\,=\,2n+1$.
\end{prop}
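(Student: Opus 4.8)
The plan is to read off $\rho$ from the long exact cohomology sequence attached to the top row of \eqref{cses} with $\ell\,=\,n$. Tensoring that short exact sequence with $E$ gives
\begin{equation*}
0 \,\longrightarrow\, E\tensor\Omega^{n-1}_X(-d) \,\longrightarrow\, E\tensor\Omega^n_{\bbP|X}
\,\longrightarrow\, E\tensor\Omega^n_X \,\longrightarrow\, 0,
\end{equation*}
and I would identify $\rho$ with the map induced by the surjection $\Omega^n_{\bbP|X}\,\onto\,\Omega^n_X$. The relevant portion of the long exact sequence is
\begin{gather*}
\HH^n(X,\, E\tensor\Omega^{n-1}_X(-d)) \,\longrightarrow\, \HH^n(X,\, E\tensor\Omega^n_{\bbP|X}) \\
\xrightarrow{\,\ \rho\,\ } \HH^n(X,\, E\tensor\Omega^n_X) \,\longrightarrow\, \HH^{n+1}(X,\, E\tensor\Omega^{n-1}_X(-d)),
\end{gather*}
so that everything reduces to controlling the two groups flanking $\rho$.

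For injectivity (in both parities) I would invoke Lemma \ref{omegacohomology}: its second vanishing with $j\,=\,1$ reads $\HH^n(X,\, E(-d)\tensor\Omega^{n-1}_X)\,=\,\HH^n(X,\, E\tensor\Omega^{n-1}_X(-d))\,=\,0$, which kills the term to the left of $\rho$. Since that lemma is stated uniformly for $n\,=\,\lfloor\dim{X}/2\rfloor$, no case distinction is needed and $\rho$ is injective whenever $\dim{X}\,=\,2n$ or $2n+1$.

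For surjectivity when $\dim{X}\,=\,2n+1$ I would show that the term $\HH^{n+1}(X,\, E\tensor\Omega^{n-1}_X(-d))$ to the right of $\rho$ vanishes. Here I would use the coboundary map \eqref{cob1}, which by Lemma \ref{topcoh} is an injection
\begin{equation*}
\HH^{n+1}(X,\, E\tensor\Omega^{n-1}_X(-d)) \,\into\, \HH^{2n}(X,\, E(-nd)).
\end{equation*}
When $\dim{X}\,=\,2n+1$ the integer $2n$ satisfies $0\,<\,2n\,<\,\dim{X}$, so $\HH^{2n}(X,\, E(-nd))\,=\,0$ because $E$ is ACM; hence the source vanishes and $\rho$ is onto. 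This is exactly where the two cases part ways: for $\dim{X}\,=\,2n$ the degree $2n$ equals $\dim{X}$, the group $\HH^{2n}(X,\, E(-nd))$ is a top cohomology and need not vanish, so the argument yields only injectivity, consistent with the statement.

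The only genuine subtlety, and the step I would guard most carefully, is the passage that replaces the cokernel term of $\rho$ by $\HH^{2n}(X,\, E(-nd))$: this is not a single connecting homomorphism but the iterated coboundary along the Koszul complex \eqref{koszulctbs1} tensored with $E(-d)$, whose injectivity rests on the intermediate vanishing already packaged into Lemma \ref{topcoh}. Once that lemma and Lemma \ref{omegacohomology} are in hand, the proposition is a short diagram chase, with the parity of $\dim{X}$ entering only through whether $2n$ is an interior or the top cohomological degree of $X$.
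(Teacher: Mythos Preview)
Your proof is correct and follows essentially the same route as the paper: both use the long exact sequence of the conormal short exact sequence tensored with $E$, kill the left-hand term via Lemma \ref{omegacohomology} with $j=1$ to get injectivity, and in the odd-dimensional case invoke the injection of Lemma \ref{topcoh} together with the ACM vanishing $\HH^{2n}(X,\,E(-nd))=0$ (since $2n<\dim X$) to conclude surjectivity. Your added remark explaining why the argument stops at injectivity when $\dim X=2n$ is a nice touch that the paper leaves implicit.
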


\begin{proof}
In view of the long exact sequence of cohomologies associated to the exact sequence
$$0\,\longrightarrow\, \Omega^{n-1}_X(-d)\,\longrightarrow\, \Omega^n_{\bbP}{_{|X}}
\,\longrightarrow\, \Omega^n_X \,\longrightarrow\, 0,$$
to prove the injectivity of $\rho$,
it suffices to show that $\HH^n(X,\, E\tensor\Omega^{n-1}_X(-d))\,=\,0$.
Now this follows from Lemma \ref{omegacohomology}.

The proof of surjectivity in the odd dimensional case is identical to the proof of Lemma \ref{topcoh}. 
The vanishings in this case yield the injection
$$\HH^{n+1}(X,\,E\tensor\Omega^{n-1}_X(-d))\,\into\,\HH^{2n}(X,\,E(-nd)).$$
However, since $\dim{X}=2n+1$, the cohomology on the right vanishes since $E$ is ACM. This gives us 
the desired result.
\end{proof}

In what follows, we shall say more about the image of the restriction map $\rho$ in the above
proposition in the even-dimensional case. For the sake of convenience, we make the
following definition:

\begin{defn}
Let $X$ be a smooth projective hypersurface of degree $d$, and $n = \lfloor{\frac{\dim X}{2}}\rfloor$.
We say a vector bundle $F$ on $X$
satisfies Hypothesis $(\star)$ if the natural homomorphism
\begin{equation}\label{regularity}
\HH^0(X,\, F(nd-2n-2))\tensor\HH^0(X,\, \Oh_X(d))
\,\longrightarrow\, \HH^0(X,\, F((n+1)d-2n-2))\tag{{$\star$}} 
\end{equation}
is surjective.
\end{defn}

\begin{remark}
Note that if $F$ is $(nd-2n-2)$--regular, then it satisfies Hypothesis $(\star)$.
\end{remark}

Consider the composition of maps of sheaves
$$\Omega^{n-1}_X\tensor\Oh_X(-d)\, =\, \Omega^{n-1}_X(-d)\,\longrightarrow\,
\Omega(n)\,\longrightarrow\, \Omega^{n-1}_X\tensor V^{\vee}\,=\, \gr^1_F\Omega(n),$$ 
where the first map is the leftmost map in diagram \eqref{cses} (set
$\ell\,=\,n$ in \eqref{cses}). This composition of maps coincides with the map
$1\tensor{-\beta}$ (see \eqref{basicdgm}). Let
\begin{equation}\label{phi}
\phi\,:\,\HH^{n+1}(X,\,E\tensor\Omega^{n-1}_X(-d))\,\longrightarrow\,
V^{\vee}\tensor\HH^{n+1}(X,\,E\tensor\Omega^{n-1}_X).
\end{equation}
be the map of cohomologies induced by the above map of sheaves.

\begin{lemma}\label{dgeq3}
Let $X\,\subset\,\bbP^{2n+1}$ be a smooth hypersurface and $E$ an ACM bundle on $X$
such that the dual bundle $E^\vee$ satisfies Hypothesis $(\star)$.
Then the map $\phi$ in \eqref{phi} is an injection.
\end{lemma}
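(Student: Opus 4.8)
The plan is to reduce the injectivity of $\phi$ to an instance of Hypothesis $(\star)$ for $E^\vee$ via Serre duality, after pushing $\phi$ down to the top cohomology groups using the coboundary maps of Lemma \ref{topcoh}. First I would make $\phi$ explicit. Fixing a basis $x_1,\dots,x_r$ of $V\,=\,\HH^0(X,\Oh_X(d))$ with dual basis $x_1^\vee,\dots,x_r^\vee$, the homomorphism $\beta\,:\,\Oh_X(-d)\to V^\vee\tensor\Oh_X$ has as components the multiplications by the $x_i$, so on cohomology $\phi(\eta)\,=\,\sum_i x_i^\vee\tensor(x_i\cdot\eta)$ (up to an overall sign coming from $-\beta$ in \eqref{basicdgm}, which plays no role below), where $x_i\cdot\eta$ is the image of $\eta$ under multiplication $\HH^{n+1}(X,E\tensor\Omega^{n-1}_X(-d))\to\HH^{n+1}(X,E\tensor\Omega^{n-1}_X)$.

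Next I would set up the commutative square
$$
\begin{array}{ccc}
\HH^{n+1}(E\tensor\Omega^{n-1}_X(-d)) & \xrightarrow{\ \phi\ } & V^\vee\tensor\HH^{n+1}(E\tensor\Omega^{n-1}_X) \\
\Big\downarrow{\partial_1} & & \Big\downarrow{1\tensor\partial_2} \\
\HH^{2n}(E(-nd)) & \xrightarrow{\ \psi\ } & V^\vee\tensor\HH^{2n}(E((-n+1)d))
\end{array}
$$
where $\partial_1$ and $\partial_2$ are the coboundary maps \eqref{cob1} and \eqref{cob2}, which are injective by Lemma \ref{topcoh}, and $\psi$ is the map $\zeta\mapsto\sum_i x_i^\vee\tensor(x_i\cdot\zeta)$ induced by $\beta\tensor\mathrm{id}$ on the bottom term $E(-nd)\,=\,E((-n+1)d)\tensor\Oh_X(-d)$ of the Koszul complex. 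The commutativity is precisely the naturality of the iterated Koszul connecting homomorphism with respect to the morphism of complexes obtained by tensoring the Koszul complex \eqref{koszulctbs1} of $\Omega^{n-1}_X$ with $\beta$: in each degree this morphism is multiplication by the $x_i$, carrying the $E(-d)$--twisted complex resolving $E\tensor\Omega^{n-1}_X(-d)$ into $V^\vee$ tensored with the $E$--twisted complex resolving $E\tensor\Omega^{n-1}_X$. Since $\partial_1$ and $1\tensor\partial_2$ are injective, a diagram chase then reduces the injectivity of $\phi$ to that of $\psi$.

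It remains to prove $\psi$ injective, and this is where Hypothesis $(\star)$ enters. The map $\psi$ fails to be injective exactly when some nonzero $\eta\in\HH^{2n}(X,E(-nd))$ satisfies $x\cdot\eta\,=\,0$ for all $x\in V$. Since $\dim X\,=\,2n$ and $\omega_X\,=\,\Oh_X(d-2n-2)$, Serre duality identifies $\HH^{2n}(X,E(-nd))^\vee\isom\HH^0(X,E^\vee((n+1)d-2n-2))$ and $\HH^{2n}(X,E((-n+1)d))^\vee\isom\HH^0(X,E^\vee(nd-2n-2))$, and transposes multiplication by $x$ to multiplication by $x$ on these $\HH^0$ groups. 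As the common kernel of the family $\{x\cdot-\}_{x\in V}$ is the annihilator of the sum of the images of the transposed maps, $\psi$ is injective if and only if the multiplication map $\HH^0(X,E^\vee(nd-2n-2))\tensor\HH^0(X,\Oh_X(d))\to\HH^0(X,E^\vee((n+1)d-2n-2))$ is surjective. This is exactly Hypothesis $(\star)$ for $F\,=\,E^\vee$, which holds by assumption; hence $\psi$, and therefore $\phi$, is injective.

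The main obstacle I anticipate is the verification of the commutativity of the square, i.e.\ the naturality of the $(n-1)$--fold iterated coboundary of \eqref{koszulctbs1} with respect to the chain map induced by $\beta$. This requires checking that multiplication by the $x_i$ commutes with each Koszul differential (built from the conormal sequence \eqref{ctbs}) and keeping careful track of the signs accumulated along the $n-1$ connecting homomorphisms, so that the two ways around the square agree on the nose rather than merely up to an undetermined scalar.
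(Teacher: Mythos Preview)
Your proposal is correct and follows essentially the same route as the paper. The paper's proof is terser: it says directly that Lemma \ref{topcoh} lets one identify $\phi$ with the restriction of the map $\HH^{2n}(X,E(-nd))\to V^\vee\tensor\HH^{2n}(X,E((-n+1)d))$, then recognizes this map as the Serre dual of the cup product and invokes Hypothesis $(\star)$; your commutative square and the map $\psi$ simply make this identification explicit, and your worry about verifying commutativity is exactly the step the paper takes for granted.
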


\begin{proof}
Using Lemma \ref{topcoh}, we can identify $\phi$ with the
the restriction of the map 
$$\HH^{2n}(X,\,E\tensor\Oh_X(-nd))\,\longrightarrow\, V^{\vee}\tensor\HH^{2n}(X,\,E\tensor\Oh_X((-n+1)d)).$$
This is the dual of the cup product map
$$
\HH^0(X,\, E^\vee(nd-2n-2))\tensor\HH^0(X,\, \Oh_X(d))
\,\longrightarrow\, \HH^0(X,\, E^\vee((n+1)d-2n-2)).$$
By our hypothesis this cup product map is surjective. Hence the map $\phi$ is injective.
\end{proof}

\begin{prop}\label{evendim}
Let $X$ and $E$ be as in Lemma \ref{dgeq3}. Then there is an isomorphism
$$\HH^n(X,\,E\tensor\Omega^n_{\bbP}) \,\isom\,
\HH^n(X,\, E\tensor\Omega^n_{X_{\epsilon}}).$$
\end{prop}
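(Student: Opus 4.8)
The plan is to read off both cohomology groups from the two short exact sequences in diagram \eqref{cses} (specialized to $\ell=n$) after tensoring with $E$. This tensoring produces a map of short exact sequences whose right-hand terms are both $E\tensor\Omega^n_X$ joined by the identity, whose middle terms are $E\tensor\Omega^n_{\bbP}\tensor\Oh_X$ and $E\tensor\Omega^n_{X_{\epsilon}}\tensor\Oh_X$, and whose left-hand terms are $E\tensor\Omega^{n-1}_X(-d)$ and $E\tensor\Omega(n)$, joined by the map $f\,:=\,\mathrm{id}_E\tensor f_0$ induced by the left vertical arrow $f_0\,:\,\Omega^{n-1}_X(-d)\to\Omega(n)$ of \eqref{cses}. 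Passing to the long exact sequences, I would first invoke the vanishings $\HH^n(X,\,E\tensor\Omega^{n-1}_X(-d))=0$ (the second vanishing of Lemma \ref{omegacohomology} with $j=1$) and $\HH^n(X,\,E\tensor\Omega(n))=0$ (Corollary \ref{Omegacohomology}). These identify $\HH^n(X,\,E\tensor\Omega^n_{\bbP}\tensor\Oh_X)$ and $\HH^n(X,\,E\tensor\Omega^n_{X_{\epsilon}}\tensor\Oh_X)$ with the kernels of the two connecting homomorphisms
$$\partial_{\bbP}\,:\,\HH^n(X,\,E\tensor\Omega^n_X)\,\longrightarrow\,\HH^{n+1}(X,\,E\tensor\Omega^{n-1}_X(-d)),\qquad \partial_{\epsilon}\,:\,\HH^n(X,\,E\tensor\Omega^n_X)\,\longrightarrow\,\HH^{n+1}(X,\,E\tensor\Omega(n)),$$
respectively, both regarded as subspaces of the common group $\HH^n(X,\,E\tensor\Omega^n_X)$; the first identification is just the injection $\rho$ of Proposition \ref{rho1_odd}.

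The problem thus reduces to showing that these two kernels coincide. By naturality of the connecting homomorphism applied to the above map of short exact sequences, and since the right-hand vertical map is the identity, one has $\partial_{\epsilon}\,=\,\HH^{n+1}(f)\circ\partial_{\bbP}$, where $\HH^{n+1}(f)\,:\,\HH^{n+1}(X,\,E\tensor\Omega^{n-1}_X(-d))\to\HH^{n+1}(X,\,E\tensor\Omega(n))$ is the map induced by $f$. This immediately gives $\ker\partial_{\bbP}\subseteq\ker\partial_{\epsilon}$, and the reverse inclusion will follow as soon as $\HH^{n+1}(f)$ is injective. So the whole statement comes down to the injectivity of $\HH^{n+1}(f)$.

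To prove $\HH^{n+1}(f)$ injective I would exploit the filtration on $\Omega(n)$. The composite $\Omega^{n-1}_X(-d)\xrightarrow{f_0}\Omega(n)\to\gr^1_{\FF}\Omega(n)=V^{\vee}\tensor\Omega^{n-1}_X$ is precisely the map $1\tensor(-\beta)$ recorded just before \eqref{phi}; hence, after tensoring by $E$ and passing to $\HH^{n+1}$, the map $\phi$ of \eqref{phi} factors as $\phi\,=\,\pi\circ\HH^{n+1}(f)$, where $\pi$ is induced by the projection $\Omega(n)\onto\gr^1_{\FF}\Omega(n)$. Since $E^{\vee}$ satisfies Hypothesis $(\star)$, Lemma \ref{dgeq3} gives that $\phi$ is injective, and therefore $\HH^{n+1}(f)$ is injective as well. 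This yields $\ker\partial_{\bbP}=\ker\partial_{\epsilon}$, and hence the desired isomorphism $\HH^n(X,\,E\tensor\Omega^n_{\bbP})\isom\HH^n(X,\,E\tensor\Omega^n_{X_{\epsilon}})$.

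The main obstacle, and the step I would treat most carefully, is the middle one: verifying that $\partial_{\epsilon}$ factors through $\partial_{\bbP}$ by exactly the induced map $\HH^{n+1}(f)$ whose injectivity is controlled by the explicit map $\phi$ of Lemma \ref{dgeq3}. Everything hinges on matching the abstract naturality square for the two connecting maps with the concrete factorization of $1\tensor(-\beta)$ through $\FF^1\Omega(n)=\Omega(n)$; once this identification is secured, the remainder of the argument is formal homological algebra together with the already-established vanishing and injectivity results.
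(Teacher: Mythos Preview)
Your proposal is correct and follows essentially the same approach as the paper: both arguments tensor diagram \eqref{cses} with $E$, invoke the vanishings of Lemma \ref{omegacohomology} and Corollary \ref{Omegacohomology} to reduce to comparing kernels of connecting maps, and then use the factorization of $\phi$ through $\HH^{n+1}(f)$ together with Lemma \ref{dgeq3} to conclude that $\HH^{n+1}(f)$ is injective. The only difference is presentational---the paper phrases the last step as showing surjectivity of the middle vertical arrow via a diagram chase, while you phrase it as equality of kernels inside $\HH^n(X,\,E\tensor\Omega^n_X)$---but the content is identical.
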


\begin{proof}
Consider the cohomology diagram associated to \eqref{cses} for $\ell\,=\,n$:
\begin{equation}\label{impdgm}
\begin{matrix}
\HH^n(X,\,E\tensor\Omega^{n-1}_X(-d)) & \longrightarrow &
\HH^n(X,\,E\tensor\Omega^n_{\bbP}) & \longrightarrow &\\
\Big\downarrow && \Big\downarrow \\
\HH^n(X,\, E\tensor\Omega(n)) & \longrightarrow &
\HH^n(X,\, E\tensor\Omega^n_{X_{\epsilon}}) & \longrightarrow &
\end{matrix}
\end{equation}
$$
\begin{matrix}
\longrightarrow & \HH^n(X,\,E\tensor\Omega^n_X) & \longrightarrow &
\HH^{n+1}(X,\,E\tensor\Omega^{n-1}_X(-d)) \\ 
& \Big\Vert & & \Big\downarrow \\ 
\longrightarrow &
\HH^n(X,\,E\tensor\Omega^n_X) & \longrightarrow & \HH^{n+1}(X,\,E\tensor\Omega(n)).
\end{matrix}
$$
{}From Lemma \ref{omegacohomology} and Corollary \ref{Omegacohomology} it follows that 
$$\HH^n(X,\,E\tensor\Omega^{n-1}_X(-d))\,=\,\HH^n(X,\,E\tensor\Omega(n))\,=\,0.$$
Thus we see that the homomorphism
$$\HH^n(X,\,E\tensor\Omega^n_{\bbP}) \,\longrightarrow\,
\HH^n(X,\,E\tensor\Omega^n_{X_{\epsilon}})$$ is injective. 

To prove surjectivity, it suffices to show that the right most vertical homomorphism
in \eqref{impdgm}, i.e., the map
$\HH^{n+1}(X,\,E\tensor\Omega^{n-1}_X(-d))\,\longrightarrow\, \HH^{n+1}(X,\,E\tensor\Omega(n))$
is injective. 
By Lemma \ref{dgeq3}, the map $\phi$, which is the composition of maps
$$\HH^{n+1}(X,\, E\tensor\Omega^{n-1}_X(-d))\,\longrightarrow\, \HH^{n+1}(X,\,
E\tensor\Omega(n)) \,\longrightarrow\,
V^{\vee}\tensor\HH^{n+1}(X,\,E\tensor\Omega^{n-1}_X),$$ is injective. Therefore, the {\it first} map
$\HH^{n+1}(X,\,E\tensor\Omega^{n-1}_X(-d))\,\longrightarrow\, \HH^{n+1}(X,\,E\tensor\Omega(n))$
is injective.
\end{proof}

\begin{thm}[The infinitesimal Lefschetz theorem for Hodge classes]\label{inlt}
Let $X\,\subset\,\bbP^{2n+1}$ be a smooth hypersurface and $E$ an ACM bundle on $X$ such that its dual satisfies
the Hypothesis $(\star)$. Then there exists an exact sequence
$$
0\,\longrightarrow\, \HH^n(X,\,{E}\tensor\Omega^n_{X_{\epsilon}}{|_X})
\,\longrightarrow\,\HH^n(X,\,E\tensor\Omega^n_X)
\,\stackrel{\kappa}{\longrightarrow}\, V^{\vee}\tensor\HH^{n+1}(X,\,E\tensor\Omega^{n-1}_X).$$
\end{thm}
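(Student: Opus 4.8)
The plan is to read off the asserted four-term sequence from the long exact cohomology sequence of the bottom row of diagram \eqref{cses} (with $\ell = n$), and then to show that composing the connecting map with the projection onto the first graded piece of the filtration does not change its kernel. First I would tensor the bottom short exact sequence $0 \to \Omega(n) \to \Omega^n_{X_{\epsilon}}\tensor\Oh_X \to \Omega^n_X \to 0$ with $E$ and take cohomology. Corollary \ref{Omegacohomology} supplies the vanishing $\HH^n(X,\, E\tensor\Omega(n)) = 0$, so the relevant segment reads
$$0\,\longrightarrow\, \HH^n(X,\, E\tensor\Omega^n_{X_{\epsilon}}|_X)\,\longrightarrow\, \HH^n(X,\, E\tensor\Omega^n_X)\,\stackrel{\partial}{\longrightarrow}\, \HH^{n+1}(X,\, E\tensor\Omega(n)),$$
and is exact; in particular the image of the leftmost map is exactly $\ker\partial$.

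Next I would define $\kappa := \psi\circ\partial$, where $\psi \colon \HH^{n+1}(X,\, E\tensor\Omega(n)) \to V^{\vee}\tensor\HH^{n+1}(X,\, E\tensor\Omega^{n-1}_X)$ is induced by the quotient $\Omega(n) = \FF^1\Omega(n) \onto \gr^1_{\FF}\Omega(n) = V^{\vee}\tensor\Omega^{n-1}_X$ onto the top graded piece of the filtration. With this definition, exactness of the claimed sequence at the middle term amounts to the single assertion $\ker\kappa = \ker\partial$.

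To prove this I would exploit the commutativity of the rightmost square of \eqref{impdgm}, which identifies $\partial$ with $\iota\circ\partial'$, where $\partial'\colon \HH^n(X,\, E\tensor\Omega^n_X) \to \HH^{n+1}(X,\, E\tensor\Omega^{n-1}_X(-d))$ is the connecting map of the top row and $\iota$ is the induced vertical map. Hence $\kappa = \psi\circ\iota\circ\partial' = \phi\circ\partial'$, where $\phi = \psi\circ\iota$ is precisely the composite shown to be injective in Lemma \ref{dgeq3}. Injectivity of $\phi$ gives $\ker\kappa = \ker\partial'$; from $\partial = \iota\circ\partial'$ one has $\ker\partial' \subseteq \ker\partial$; and from $\kappa = \psi\circ\partial$ one has $\ker\partial \subseteq \ker\kappa$. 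Chaining, $\ker\partial \subseteq \ker\kappa = \ker\partial' \subseteq \ker\partial$, so all three kernels agree and $\ker\kappa = \ker\partial$, as needed.

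The substance of the argument has been isolated beforehand as Lemma \ref{dgeq3}: the injectivity of $\phi$ is where Hypothesis $(\star)$ on $E^\vee$ enters, via Serre duality and the surjectivity of the cup-product (multiplication) map. Granting that, the rest is a diagram chase, so the only real obstacle is a bookkeeping one: I would need to verify that the $\gr^1_{\FF}$-projection $\psi$ used to build $\kappa$ is literally the second map in the factorization of $\phi$ from Lemma \ref{dgeq3}, so that $\phi = \psi\circ\iota$ holds on the nose (and not merely up to a harmless sign), which is what makes the kernel-chaining valid.
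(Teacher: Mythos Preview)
Your proposal is correct and follows essentially the same route as the paper. The paper's proof is extremely terse---it defines $\kappa$ exactly as your $\psi\circ\partial$ and then simply says ``the theorem now follows from the identification in Proposition~\ref{evendim}''---but unpacking that reference amounts precisely to the kernel chase you write out: the proof of Proposition~\ref{evendim} already shows (via Lemma~\ref{dgeq3}) that the vertical map $\iota$ in the rightmost square of \eqref{impdgm} is injective, which together with the injectivity of $\phi=\psi\circ\iota$ forces $\ker\partial=\ker\partial'=\ker\kappa$. Your final bookkeeping worry is handled by the sentence just above \eqref{phi}, where the composite $\Omega^{n-1}_X(-d)\to\Omega(n)\to V^{\vee}\otimes\Omega^{n-1}_X$ is identified with $1\otimes(-\beta)$, so $\phi=\psi\circ\iota$ holds (up to an irrelevant sign).
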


\begin{proof} The map $\kappa$ is just the composition of maps 
$$
\HH^n(X,\,E\tensor\Omega^n_X)\,\longrightarrow\, \HH^{n+1}(X,\,E\tensor\Omega(n))
\,\longrightarrow\, V^\vee\tensor\HH^{n+1}(X,\,E\tensor\Omega^{n-1}_X).$$
The theorem now follows from the identification in Proposition \ref{evendim}.
\end{proof}

\begin{remark}\label{ksmap}
The homomorphism $\kappa$ in Theorem \ref{inlt} is in fact the Kodaira-Spencer map
\begin{equation}
\begin{array}{ccc}
\HH^n(X,\,E\tensor\Omega^n_X) & \longrightarrow & \mbox{Hom}\left(T_{S,o},\, \HH^{n+1}(X,\,E\tensor\Omega^{n-1}_X)\right) \\
\xi & \longmapsto & (\frac{\partial}{\partial{x}} \longmapsto \frac{\partial}{\partial{x}}(\xi)) .
\end{array}
\end{equation}
Hence, if $X$ is general fiber in the universal family of degree $d$ hypersurfaces $\mathcal{X}\,\longrightarrow\, S$, so that $\xi$ is the restriction of a section in 
$\HH^0(S',\, \RR^n p_\ast(\mathcal{E}\tensor\Omega^n_{\mathcal{X'/S'}}))$, then $\xi\,\longmapsto\, 0$ under the Kodaira-Spencer map.
In particular, when $E\,\isom\,\Oh_X$, the cohomology group $\HH^n(X,\,\Omega^n_{X_{\epsilon}}{_{|X}})$ can be identified with the
subspace of cohomology classes of type $(n,\,n)$ on $X$ which deform
infinitesimally, i.e., they continue to be of type $(n,\,n)$ under infinitesimal
deformations. This statement is the usual {\it infinitesimal Noether-Lefschetz theorem} (see \cite{R}). 
\end{remark}

\section{Proof of Theorem \ref{nltforacm_new}}

We now apply the splitting criterion in Lemma \ref{splitting_criterion}
to the case of interest to us where $E$ is an ACM bundle on a general
hypersurface of dimension at least {\it two}.

\begin{lemma}\label{partial2}
Let $X\,\subset\,\bbP^{N}$ be an irreducible smooth projective variety 
and let $F$ be a vector bundle on $X$.
Set $n\,:=\,\lfloor{\frac{\dim X}{2}}\rfloor$, so that $\dim{X}$ is either $2n$ or $2n+1$ . 
Then the map
$$\partial\,:\, \HH^0(X,\, F) \,\longrightarrow\, \HH^n(X,\, F\tensor\Omega^n_\bbP)$$
in \eqref{partial} is surjective, if
$$
\HH^i(X,\, F(-i))\,=\,0
$$
for all $1\,\leq\,i \,\leq\, n$.
\end{lemma}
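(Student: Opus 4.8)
The plan is to exhibit $\partial$ as an $n$-fold composite of connecting homomorphisms arising from the Euler--Koszul complex \eqref{koszuleuler}, and then to verify that each factor is surjective under the stated vanishing. First I would set $p\,=\,n$ in \eqref{koszuleuler}, tensor the complex with $F$, and restrict it to $X$, producing the exact sequence of sheaves on $X$
$$0 \,\longrightarrow\, F\tensor\Omega^n_{\bbP} \,\longrightarrow\, \wedge^n W_1^\ast\tensor F(-n) \,\longrightarrow\, \cdots \,\longrightarrow\, W_1^\ast\tensor F(-1) \,\longrightarrow\, F \,\longrightarrow\, 0.$$
Writing $C^j\,:=\,\wedge^{n-j}W_1^\ast\tensor F(-(n-j))$ for $0\,\le\, j\,\le\, n$ (so that $C^n\,=\,F$), I would break this long exact sequence into the short exact sequences
$$0 \,\longrightarrow\, B^j \,\longrightarrow\, C^j \,\longrightarrow\, B^{j+1} \,\longrightarrow\, 0, \qquad 0\,\le\, j\,\le\, n-1,$$
where $B^j$ is the image sheaf of the relevant map, so that $B^0\,=\,F\tensor\Omega^n_{\bbP}$ and $B^n\,=\,F$.

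By the standard construction of the coboundary map of a resolution, $\partial$ in \eqref{partial} is the composite $\delta_0\circ\delta_1\circ\cdots\circ\delta_{n-1}$, where $\delta_j\colon \HH^{n-1-j}(X,\,B^{j+1})\,\longrightarrow\,\HH^{n-j}(X,\,B^j)$ is the connecting homomorphism of the $j$-th short exact sequence above. Since a composite of surjections is surjective, it is enough to prove that each $\delta_j$ is surjective. From the relevant segment of the long exact sequence of cohomologies,
$$\HH^{n-1-j}(X,\,B^{j+1}) \,\xrightarrow{\,\ \delta_j\,\ }\, \HH^{n-j}(X,\,B^j) \,\longrightarrow\, \HH^{n-j}(X,\,C^j),$$
the map $\delta_j$ is surjective as soon as $\HH^{n-j}(X,\,C^j)\,=\,0$. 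As $C^j\,=\,\wedge^{n-j}W_1^\ast\tensor F(-(n-j))$, this cohomology group equals $\wedge^{n-j}W_1^\ast\tensor\HH^{n-j}(X,\,F(-(n-j)))$.

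It remains only to match indices. Setting $i\,=\,n-j$, as $j$ runs through $0,1,\ldots,n-1$ the integer $i$ runs through $n,n-1,\ldots,1$, so the vanishing $\HH^{n-j}(X,\,F(-(n-j)))\,=\,0$ needed for each $\delta_j$ is precisely the hypothesis $\HH^i(X,\,F(-i))\,=\,0$ for $1\,\le\, i\,\le\, n$. Hence every $\delta_j$ is surjective, and therefore so is $\partial$. I do not anticipate a genuine obstacle here: the whole argument is a routine filtration-and-diagram chase, and the only point demanding care is the bookkeeping that aligns the connecting-map index $j$ with the twist $i\,=\,n-j$ appearing in the hypothesis $\HH^i(X,\,F(-i))\,=\,0$.
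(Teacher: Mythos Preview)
Your argument is correct and follows essentially the same route as the paper. The only cosmetic difference is that the paper identifies your syzygy sheaves explicitly as $B^{j}\,\cong\, F\tensor\Omega^{\,n-j}_{\bbP}$ (via the Euler exterior-power sequences), writes the short exact sequences as
\[
0 \,\longrightarrow\, F\tensor\Omega^{k+1}_{\bbP} \,\longrightarrow\, \wedge^{k+1}W_1^\vee\tensor F(-k-1) \,\longrightarrow\, F\tensor\Omega^{k}_{\bbP} \,\longrightarrow\, 0,
\]
and then reads off the surjectivity of each $\tilde\partial_k\colon \HH^k(X,\,F\tensor\Omega^k_\bbP)\to\HH^{k+1}(X,\,F\tensor\Omega^{k+1}_\bbP)$ from the same vanishing $\HH^{k+1}(X,\,F(-k-1))=0$; your index $i=n-j$ is their $k+1$, so the bookkeeping matches exactly.
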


\begin{proof}
Tensor the Koszul complex \eqref{koszuleuler}
with $F$. Breaking up the resulting exact sequence into short exact sequences,
we get
$$
0\,\longrightarrow\, F\otimes \Omega^{k+1}_{\bbP}\,\longrightarrow\, F\otimes\wedge^{k+1}
W_1^\vee\tensor\Oh_{\bbP}(-k-1)\,\longrightarrow\,F\otimes\Omega^{k}_{\bbP}\,\longrightarrow\, 0
$$
for all $0\,\leq\, k \,\leq\, n-1$.
Consider the corresponding long exact sequence of cohomologies. Incorporating the
hypotheses, a surjection
$$\tilde\partial_k\,:\, \HH^k(X,\, F\tensor\Omega^k_{\bbP})
\,\longrightarrow\, \HH^{k+1}(X,\, F\tensor\Omega^{k+1}_{\bbP})$$
is obtained for every $0 \,\leq\, k \,\leq\, {n-1}$. Therefore, the
composition of homomorphisms
$$
\partial\,=\, \tilde\partial_{n-1}\circ\cdots\circ\tilde \partial_0 \,:\,
\HH^0(X,\, F)\,\longrightarrow\, \HH^{n}(X,\, F\tensor\Omega^{n}_{\bbP})
$$
is surjective.
\end{proof}

\begin{cor}\label{init2}
In the statement of Lemma \ref{partial2}, if we assume in addition that 
$$\HH^i(X,\, F(-i-1))\,=\,0, \,\, \,\,0 \,\leq\, i \,<\, n,$$ then the map $\partial$ is an isomorphism.
\end{cor}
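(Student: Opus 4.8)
The surjectivity of $\partial$ is already contained in Lemma \ref{partial2}, so the only thing that remains is to establish injectivity; combined with surjectivity this will give the claimed isomorphism. The plan is to reuse verbatim the factorization of $\partial$ into connecting homomorphisms set up in the proof of Lemma \ref{partial2}, namely $\partial\,=\,\tilde\partial_{n-1}\circ\cdots\circ\tilde\partial_0$, where each $\tilde\partial_k$ arises from the short exact sequence
$$0\,\longrightarrow\, F\tensor\Omega^{k+1}_\bbP\,\longrightarrow\, F\tensor\wedge^{k+1}W_1^\vee\tensor\Oh_\bbP(-k-1)\,\longrightarrow\, F\tensor\Omega^k_\bbP\,\longrightarrow\,0.$$
Since a composition of isomorphisms is an isomorphism, it suffices to show that each $\tilde\partial_k$ is injective (we already know it is surjective) for $0\,\leq\, k\,\leq\, n-1$.

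Next I would read the injectivity of $\tilde\partial_k$ off the same long exact sequence of cohomologies used before. The term immediately to the left of $\HH^k(X,\, F\tensor\Omega^k_\bbP)$, whose image is the kernel of $\tilde\partial_k$, is $\HH^k$ of the middle sheaf, and because $\wedge^{k+1}W_1^\vee$ is just a vector space this equals $\wedge^{k+1}W_1^\vee\tensor\HH^k(X,\, F(-k-1))$. Hence $\tilde\partial_k$ is injective as soon as $\HH^k(X,\, F(-k-1))\,=\,0$. Setting $i\,=\,k$, the additional hypothesis $\HH^i(X,\, F(-i-1))\,=\,0$ for $0\,\leq\, i\,<\, n$ is precisely what makes this vanish for every $k$ in the range $0\,\leq\, k\,\leq\, n-1$.

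Putting the two halves together, each $\tilde\partial_k$ is both injective (by the new hypothesis, with $i\,=\,k$) and surjective (by the hypothesis of Lemma \ref{partial2}, which forces the vanishing of the neighboring term $\wedge^{k+1}W_1^\vee\tensor\HH^{k+1}(X,\, F(-k-1))$ on the right via $i\,=\,k+1$), hence an isomorphism; composing them shows that $\partial$ is an isomorphism. I do not expect a genuine obstacle here: the whole argument is the bookkeeping observation that surjectivity at step $k$ controls the right-hand neighbor (indexed by $i\,=\,k+1$) while injectivity controls the left-hand neighbor (indexed by $i\,=\,k$), so the two families of vanishings $\HH^i(X,\,F(-i))\,=\,0$ and $\HH^i(X,\,F(-i-1))\,=\,0$ together trivialize both connecting maps at each stage. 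The only point requiring a little care is confirming that the index ranges line up at the boundary, in particular that the $k\,=\,0$ step uses exactly $\HH^0(X,\, F(-1))\,=\,0$.
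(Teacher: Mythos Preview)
Your proposal is correct and is exactly the argument the paper has in mind: the corollary is stated without proof precisely because it follows by the same short-exact-sequence bookkeeping you carry out, with the extra vanishing $\HH^k(X,\,F(-k-1))=0$ killing the left-hand neighbor in the long exact sequence and thereby making each $\tilde\partial_k$ injective. Your index check at the boundary $k=0$ is also correct.
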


\subsection{The case of even-dimensional hypersurfaces}

\begin{thm}\label{evencase}
Let $X\,\subset\,\bbP^{2n+1}$ 
be a general hypersurface of degree $d$ and dimension at least $2$,
and let $E$ be an ACM bundle on $X$ such that both $E$ and its dual $E^\vee$ satisfy Hypothesis \eqref{regularity}.
Then any non-zero class $\zeta \,\in\, \HH^n(X,\, E\tensor\Omega^n_X)$ yields a
direct summand of $E$ isomorphic to ${\mathcal O}_X$.
\end{thm}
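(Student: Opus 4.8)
The plan is to apply the splitting criterion of Lemma~\ref{splitting_criterion} with $p\,=\,q\,=\,n$, since $\dim X\,=\,2n$. I will take $F\,=\,E$ and $F\,=\,E^\vee$ in the various auxiliary results, and verify the four hypotheses of that lemma: injectivity of $\partial_1$ and $\rho_1$, and surjectivity of $\partial_2$ and $\rho_2$. The non-zero Hodge class $\zeta\,\in\,\HH^n(X,\,E\tensor\Omega^n_X)$ will be used to guarantee that $\HH^0(X,\,E)\,\neq\,0$, so that the section $s$ in the proof of the splitting criterion can be produced; once all four maps behave as required, the criterion immediately yields a non-zero pairing $\varphi$ and hence an $\Oh_X$-summand of $E$ via Lemma~\ref{split}.

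The core of the argument is checking the four conditions. For the surjectivity of $\partial_2\,:\,\HH^0(X,\,E^\vee)\,\to\,\HH^n(X,\,E^\vee\tensor\Omega^n_\bbP)$ and, symmetrically, for $\partial_1$, I would invoke Lemma~\ref{partial2}: its hypotheses $\HH^i(X,\,F(-i))\,=\,0$ for $1\,\le\,i\,\le\,n$ follow from $F$ being ACM (these are intermediate cohomology groups of a twist, since $i\,<\,\dim X$), so both $\partial_1$ and $\partial_2$ are surjective; combined with Corollary~\ref{init2} under the additional vanishing $\HH^i(X,\,F(-i-1))\,=\,0$ they become isomorphisms, which upgrades surjectivity of $\partial_1$ to the injectivity the lemma demands. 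The injectivity of $\rho_1$ is exactly the even-dimensional case of Proposition~\ref{rho1_odd}, applied to $E$. The surjectivity of $\rho_2\,:\,\HH^n(X,\,E^\vee\tensor\Omega^n_\bbP)\,\to\,\HH^n(X,\,E^\vee\tensor\Omega^n_X)$ is where Hypothesis~$(\star)$ on $E^\vee$ enters: this is the substance built up in Lemma~\ref{dgeq3}, Proposition~\ref{evendim} and the infinitesimal Lefschetz Theorem~\ref{inlt}, whose exact sequence identifies the kernel of the Kodaira--Spencer map $\kappa$ with $\HH^n(X,\,E\tensor\Omega^n_{X_\epsilon}|_X)\,\isom\,\HH^n(X,\,E\tensor\Omega^n_\bbP)$.

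The main obstacle, and the place where generality of $X$ is essential, is producing the surjectivity of $\rho_2$ from these ingredients. The infinitesimal Lefschetz theorem only tells me that a class lying in the image of $\rho_2$ (equivalently, one coming from $\Omega^n_{X_\epsilon}|_X$) is killed by $\kappa$; conversely I must argue that \emph{every} relevant class for the general fiber is so killed. This is precisely the content of Remark~\ref{ksmap}: because $E$ and $\zeta$ arise by restriction from the family $\mathcal{E}$ over $\mathcal{X}'$, the class $\zeta$ is the fiber of a global section of $\RR^n p_\ast(\mathcal{E}\tensor\Omega^n_{\mathcal{X}'/S'})$, and such relative classes are automatically annihilated by the Kodaira--Spencer map. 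Thus for the general hypersurface the image of $\rho_2$ fills out the kernel of $\kappa$, giving the surjectivity needed. I would present this genericity step carefully, as it is the only point where the hypothesis ``$X$ general'' is used and where the Mohan Kumar--Srinivas infinitesimal formalism is converted into a statement about the actual restriction map on the general fiber.

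Having verified all four hypotheses, I conclude by feeding the non-zero $\zeta$ and the resulting section $s$ into Lemma~\ref{splitting_criterion}, obtaining the non-zero pairing and hence the $\Oh_X$-summand, which completes the proof of the theorem.
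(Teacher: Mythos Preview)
Your plan to invoke Lemma~\ref{splitting_criterion} verbatim has a genuine gap at the step you yourself flag as the ``main obstacle'': the surjectivity of $\rho_2\,:\,\HH^n(X,\,E^\vee\tensor\Omega^n_\bbP)\,\to\,\HH^n(X,\,E^\vee\tensor\Omega^n_X)$. This map is \emph{not} surjective in general, even for general $X$. Theorem~\ref{inlt} identifies its image with $\ker(\kappa_{E^\vee})$, and Remark~\ref{ksmap} only says that a class which is the restriction of a \emph{global} section of $\RR^n p_\ast(\mathcal{E}^\vee\tensor\Omega^n_{\mathcal{X}'/S'})$ lies in that kernel --- it does not say that every class in the fibre does. (Already in the classical case $E=\Oh_X$, $n=1$, the kernel of $\kappa$ is one-dimensional while $\HH^1(X,\Omega^1_X)$ is large.) Your paragraph conflates the fact that the \emph{given} class $\zeta\in\HH^n(X,E\tensor\Omega^n_X)$ is killed by $\kappa$ (which is true and relevant for $\rho_1$, not $\rho_2$) with a blanket claim about $\HH^n(X,E^\vee\tensor\Omega^n_X)$; these live on opposite sides of the Serre pairing. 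A secondary issue: invoking Corollary~\ref{init2} to make $\partial_1$ injective requires $\HH^0(X,E(-1))=0$, which is not assumed here.

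The paper does \emph{not} prove surjectivity of $\rho_2$. Instead it runs the splitting argument by hand: first it uses generality and Hypothesis~$(\star)$ for $E^\vee$ to place $\zeta$ in the image of $\rho_1\circ\partial_1$, producing $s\in\HH^0(X,E)$ with $\rho_1\partial_1(s)=\zeta$. Then, via relative Serre duality over a neighbourhood $U\subset S'$, the non-zero functional $\varphi_2(\zeta,\,\cdot\,)$ globalises to a non-zero map $\widetilde{\zeta}\,:\,\RR^n p_\ast(\mathcal{E}^\vee\tensor\Omega^n_{\mathcal{X}'/S'})\to\Oh_U$, and one chooses a \emph{section} $\widetilde{\xi}$ over $U$ on which $\widetilde{\zeta}$ is non-zero. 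Because $\widetilde{\xi}$ is a section over $U$, its value $\xi_o$ at $o$ deforms infinitesimally, hence lies in $\ker(\kappa_{E^\vee})=\mathrm{image}(\rho_2)$ by Theorem~\ref{inlt} (now using Hypothesis~$(\star)$ for $E$). Only then does one pull $\xi_o$ back to some $t\in\HH^0(X,E^\vee)$ via surjectivity of $\partial_2$. The point is that one constructs a \emph{single} dual class $\xi_o$ that simultaneously pairs non-trivially with $\zeta$ \emph{and} lies in the image of $\rho_2\circ\partial_2$; one never asserts that $\rho_2$ is surjective. You should replace your appeal to Lemma~\ref{splitting_criterion} with this explicit construction.
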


\begin{proof}
We recall our convention in \textsection $1$.
Let $p\,:\,\mathcal{X}\,\longrightarrow\, S$ be the universal family of smooth
hypersurfaces of degree $d$ in $\bbP\,=\, \bbP^{2n+1}$, and let $q\,:\,\mathcal{X}
\,\longrightarrow\, \bbP$ be the natural projection. Then there exists an \'etale map $S' \,\longrightarrow\, S$ and 
an ACM bundle $\mathcal{E}$ on $\mathcal{X}'\,:=\,\mathcal{X}\times_S S'$ which is flat over $S'$ such that for every $y\, \in\, S'$,
the ACM bundle $\mathcal{E}_y$ on the hypersurface $X_y$ satisfies the hypothesis
in Lemma \ref{dgeq3}.

Let $o\,\in\, S'$ be a point such that  $X\,=\,X_0$.
Consider the diagram
\begin{equation}\label{va}
\begin{array}{ccccc}
X & \into & X_\epsilon & \into & \mathcal{X}' \\
\Big\downarrow & & \Big\downarrow & & \Big\downarrow \\
\Spec\bbC & \into & \Spec{A} & \into & S' \\
\end{array}
\end{equation}
With an abuse of notation, all the vertical maps in \eqref{va}
are denoted by $p$. By shrinking $S'$ if necessary, we have 
$$\HH^0(\mathcal{X}', \,\mathcal{E})\,\isom\, \HH^0(S',\, p_\ast\mathcal{E})
\,\isom\, (p_\ast\mathcal{E})_o\,\isom\, \HH^0(X,\, E)\,\stackrel{\partial_1}{\onto}
\,\HH^n(X,\, E\tensor\Omega^n_{\bbP}) 
\,\stackrel{\rho_1}{\into}\, \HH^n(X,\, E\tensor\Omega^n_{X}).$$
Here $\partial_1$ is a surjection by Lemma \ref{partial2} and $\rho_1$ is an injection
by Proposition \ref{rho1_odd}. A quick note on the isomorphisms above: 
the first isomorphism follows from a Leray spectral sequence argument, whereas the
second one follows since $p_\ast\mathcal{E}$ is locally trivial and we are allowed to shrink
$S'$ if necessary. The third isomorphism follows from the proper base change formula.

Since $E$ is a bundle on a general hypersurface $X$, once again, by the proper base change formula,
we have the identification
$$ (\RR^np_\ast(\mathcal{E}^\vee\tensor\Omega^n_{\mathcal{X}'/S'}))_o \,\isom \, 
\HH^n(X,\, E\tensor\Omega^n_{X}),$$
under which the Hodge class $\zeta$ comes from a global class. Consequently (see Remark \ref{ksmap}), 
it is in the kernel of
$$\kappa\,:\, \HH^n(X,\, E\tensor\Omega^n_X)
\,\longrightarrow\, V^\vee\tensor\HH^{n+1}(X, \,E\tensor\Omega^{n-1}_X).$$ 
Since $E^\vee$ satisfies Hypothesis $(\star)$, by Theorem \ref{inlt} and Proposition \ref{evendim},
$$\zeta\, \in \, \HH^n(X,\, E\tensor\Omega^n_{\bbP}).$$
Let
\begin{equation}\label{ts1}
\widetilde{s}\,\in \,\HH^0(\mathcal{X}',\, \mathcal{E}),
\end{equation}
with
\begin{equation}\label{ts2}
s\,\in\, \HH^0(X,\,E)
\end{equation}
its restriction to the fiber $X$, such that
$\partial_1(s)\,=\,\zeta$. Then, via the perfect pairing $\varphi_2$ in
Lemma \ref{splitting_criterion},
$$ \HH^n(X,\ E\tensor\Omega^n_{X}) \times \HH^{n}(X,\ E^\vee\tensor\Omega^{n}_X)
\,\xrightarrow{\,\ \varphi_2\,\ } \,\HH^{2n}(X,\ \omega_{X}) $$
its image
$\rho_1(\partial_1{s})\,\in\, \HH^n(X,\, E\tensor\Omega^n_X)$ defines a non-zero map 
$$\zeta\,:\, \HH^n(X,\, E^\vee\tensor\Omega^n_X) \,\longrightarrow\, \bbC,\ 
\ \hspace{2mm} \xi \,\longmapsto\, \varphi_2(\zeta,\,\xi) \,\in\, \bbC.$$

This perfect pairing is obtained by taking the fiber at the point $o \in S'$ of the perfect pairing (\,=\, Serre
duality in families )
$$ \RR^np_\ast(\mathcal{E}\tensor\Omega^n_{\mathcal{X}'/S'}) \,\times \,
 \RR^np_\ast(\mathcal{E}^\vee\tensor\Omega^n_{\mathcal{X}'/S'})
\,\longrightarrow\, \RR^{2n}p_\ast\omega_{\mathcal{X'}/S'}
\,\stackrel{tr}{\longrightarrow}\, \Oh_{S'}.$$
Here $tr$ is trace map.

Thus, we have a homomorphism
$$\widetilde{\zeta}\,:\, \RR^np_\ast(\mathcal{E}^\vee\tensor\Omega^n_{\mathcal{X}'/S'})
\,\longrightarrow\, \widetilde\bbC.$$
This $\widetilde{\zeta}$ is non-zero at the point $o\,\in\, S'$. Consequently, there is a
neighbourhood $U \,\subset\, S'$ of $o$ on which $\widetilde{\zeta}$
is non-zero everywhere. This implies that there is a non-zero section $$\widetilde\xi
\, \in\, \HH^0(U,\, \RR^np_\ast(\mathcal{E}^\vee\tensor\Omega^n_{\mathcal{X}'/S'}))$$
such that $(\rho_1(\partial_1\widetilde{s}),\, \widetilde\xi)$ is a non-zero section of
$\widetilde\bbC$ (see \eqref{ts1} for $\widetilde{s}$). Since $\widetilde\xi$ is defined over a neighbourhood
$U$, this implies that it {\it deforms infinitesimally}, and hence at the point $o\,\in\, U$
we have $$\xi_o:=\widetilde{\xi}(0)\, \in\, {\rm kernel}(\kappa).$$ 
As $E$ satisfies Hypothesis $(\star)$, by Lemma \ref{partial2}, and
the identifications in Theorem \ref{inlt} and Proposition \ref{evendim}, we have
$$
\HH^0(X,\, E^\vee)\,\stackrel{\partial_2}{\onto}
\,\HH^n(X,\, E^\vee\tensor\Omega^n_{\bbP}) 
\,\stackrel{\rho_2}{\into}\, \HH^n(X,\, E^\vee\tensor\Omega^n_{X}).$$
Thus, 
there exists $t\,\in\, \HH^0(X,\,E^\vee)$ such that $\xi_o\,=\,\rho_2(\partial_2(t))$, and in particular we see
that $(s,\,t)\,\neq\, 0$ (see \eqref{ts2} for $s$) under the pairing
$$\HH^0(X,\, E)\times\HH^0(X,\,E^\vee)\,\longrightarrow\, \HH^0(X,\, \Oh_X).$$
This implies that $E\,\isom\, E'\oplus \Oh_X$.
\end{proof}

\subsection{Odd dimensional hypersurface}

We begin with the following lemma.

\begin{lemma}\label{restrictionhodgeclass}
Let $X$ be a smooth hypersurface in $\bbP^{2n+2}$.
Suppose that $E$ is an ACM bundle with a section $s$ such that its image 
$$\xi\, :=\, \rho\circ \partial(s) \,\in \,\HH^n(X,\, E\tensor\Omega^n_X)$$
is non-zero (see Lemma \ref{partial2} and Proposition \ref{rho1_odd} for $\partial$ and $\rho$
respectively). Then there is a smooth hyperplane section $Y\,\subset\, X$
with defining polynomial $\ell \,\in \,\HH^0(X,\, \Oh_X(1))$ such that
the image of the restriction $s_{Y}$ in $\HH^n(Y,\,
E\tensor\Omega^n_Y)$, denoted by $\xi_Y$, is also non-zero. 
\end{lemma}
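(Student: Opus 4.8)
The plan is to deduce the non‑vanishing of $\xi_Y$ from that of $\xi$ by realizing the whole construction $s\mapsto\rho\circ\partial(s)$ as natural under restriction to a hyperplane section, and then checking that the resulting restriction map does not annihilate the class. First I would record the two facts that let me work entirely at the level of $\Omega^\bullet_\bbP$. Since $\dim X=2n+1$, Proposition \ref{rho1_odd} says $\rho$ is an isomorphism on $X$, so $\xi\neq 0$ is the same as $v:=\partial(s)\neq 0$ in $\HH^n(X,\,E\tensor\Omega^n_\bbP)$ (here $\bbP=\bbP^{2n+2}$). A smooth hyperplane section $Y$ lies in a hyperplane $\bbP'\isom\bbP^{2n+1}$, and the sequence $0\to E(-1)\to E\to E|_Y\to 0$ together with the ACM hypothesis shows $E|_Y$ is again ACM; hence on $Y$ (now even‑dimensional) Lemma \ref{partial2} gives $\partial$ surjective and Proposition \ref{rho1_odd} gives $\rho$ injective. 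Consequently $\xi_Y\neq 0$ if and only if $\partial(s|_Y)\neq 0$ in $\HH^n(Y,\,E|_Y\tensor\Omega^n_{\bbP'})$, and the problem becomes: find a smooth $Y$ for which $\partial(s|_Y)\neq 0$.

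Next I would use naturality of the Euler--Koszul construction \eqref{koszuleuler}. The morphism of Euler sequences induced by $\bbP'\into\bbP$, together with the conormal sequence of $\bbP'$ in $\bbP$, produces a natural surjection $\Omega^n_\bbP|_Y\onto\Omega^n_{\bbP'}$ and identifies $\partial(s|_Y)$ with the image of $v$ under a restriction homomorphism $r\,:\,\HH^n(X,\,E\tensor\Omega^n_\bbP)\to\HH^n(Y,\,E|_Y\tensor\Omega^n_{\bbP'})$. I would then factor $r=r_2\circ r_1$, where $r_1$ is pullback along $Y\into X$ to $\HH^n(Y,\,E|_Y\tensor\Omega^n_\bbP|_Y)$ and $r_2$ is induced by $\Omega^n_\bbP|_Y\onto\Omega^n_{\bbP'}$, and treat the two factors separately.

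The factor $r_2$ is the easy one. The conormal sequence of $\bbP'\subset\bbP$ gives $0\to\Omega^{n-1}_{\bbP'}(-1)|_Y\to\Omega^n_\bbP|_Y\to\Omega^n_{\bbP'}|_Y\to 0$; tensoring with $E|_Y$ shows $r_2$ is injective once $\HH^n(Y,\,E|_Y\tensor\Omega^{n-1}_{\bbP'}(-1)|_Y)=0$. Resolving $E|_Y$ by its minimal resolution on $\bbP'=\bbP^{2n+1}$ and applying Bott's formula exactly as in Lemma \ref{omegacohomology} and Lemma \ref{topcoh}, both the $\HH^n$ and the $\HH^{n+1}$ of $\Omega^{n-1}_{\bbP'}$ (any twist) vanish, since the cohomological degree never equals the form degree $n-1$; hence $r_2$ is injective for every smooth $Y$.

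The main obstacle is the factor $r_1$. From $0\to E\tensor\Omega^n_\bbP(-1)\xrightarrow{\,\ell\,} E\tensor\Omega^n_\bbP\to(E\tensor\Omega^n_\bbP)|_Y\to 0$ one sees that $\ker r_1$ is the image of multiplication by $\ell$ out of $\HH^n(X,\,E\tensor\Omega^n_\bbP(-1))$; so $r_1(v)\neq 0$ precisely when $v$ is not divisible by $\ell$. A Bott computation on the minimal resolution \eqref{minres} shows $\HH^n(X,\,E\tensor\Omega^n_\bbP(-1))=0$ unless $E$ has a minimal generator forcing an $\Oh_\bbP(1)$ summand in $\widetilde{F}_0$, i.e.\ a generator in $\HH^0(E(-1))$. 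When $E$ is initialized (as in Corollary \ref{init}) this group vanishes, $r_1$ is injective for every smooth $Y$, and since smooth $Y$ exist by Bertini we are done. In general the crux is to rule out that the nonzero class $v=\partial(s)$ is divisible by every linear form: using that $\partial$ is surjective and compatible with multiplication in each degree, such universal divisibility would place $s$ in $\ker\partial+\ell\,\HH^0(E(-1))$ for all $\ell$, which is incompatible with $v\neq 0$ once the degree $(-1)$ group above is controlled. I expect this non‑divisibility (a Lefschetz‑type injectivity for $r_1$) to be the only genuinely delicate point; the naturality of $\partial$ under restriction and the Bott vanishings feeding $r_2$ are routine.
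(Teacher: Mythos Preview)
Your framework is sound: reducing to injectivity of $r=r_2\circ r_1$ on $\HH^n(X,E\tensor\Omega^n_\bbP)$ is correct, and your Bott argument for $r_2$ is fine. The gap is that you stop short of the one observation that actually finishes the argument, and it is not delicate. Unwind the Euler--Koszul isomorphisms you are already using: since $E$ is ACM the coboundaries $\HH^k(X,E\tensor\Omega^k_\bbP)\to\HH^{k+1}(X,E\tensor\Omega^{k+1}_\bbP)$ are isomorphisms for $1\le k\le n-1$, and likewise for $E(-1)$, so
\[
\HH^n(X,E\tensor\Omega^n_\bbP)\;\isom\;\HH^0(X,E)\big/W_1\!\cdot\!\HH^0(X,E(-1)),\qquad
\HH^n(X,E\tensor\Omega^n_\bbP(-1))\;\isom\;\HH^0(X,E(-1))\big/W_1\!\cdot\!\HH^0(X,E(-2)).
\]
Under these identifications, multiplication by $\ell$ sends $[t]\mapsto[\ell t]$; but $\ell\in W_1$, so $\ell t\in W_1\!\cdot\!\HH^0(E(-1))$ and the map is \emph{identically zero}. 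Hence $r_1$ is injective for \emph{every} hyperplane section, and your worry about ``divisibility by every linear form'' evaporates: a single smooth $Y$ suffices, and any one works.

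This is precisely the paper's argument, only the paper goes to level $\HH^1$ from the start and never passes to $\bbP'=\bbP^{2n+1}$. It records the isomorphisms $\HH^i(X,E\tensor\Omega^i_\bbP)\isom\HH^{i+1}(X,E\tensor\Omega^{i+1}_\bbP)$, then writes the $3\times 3$ diagram obtained from the Euler sequence for $\bbP^{2n+2}$ tensored with $0\to E(-1)\to E\to E_1\to 0$. If $\xi_Y=0$, the bottom row says $s_Y$ comes from $W_1\tensor\HH^0(E_1(-1))$; lifting (the left column is exact since $E$ is ACM) gives $\sigma\in W_1\!\cdot\!\HH^0(X,E(-1))$ with $\sigma|_Y=s_Y$, so $s-\sigma=\ell t$ and hence $s\in W_1\!\cdot\!\HH^0(X,E(-1))$, forcing $\partial(s)=0$ already at level $\HH^1$. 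Same punchline. Your route is a bit longer because you introduce $\bbP'$ and then need the extra Bott step $r_2$ to undo it; the paper's choice to keep $\Omega^\bullet_{\bbP^{2n+2}}|_Y$ throughout avoids that detour.
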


\begin{proof}
First note that we have a series of isomorphisms
$$\HH^i(X,\, E\tensor\Omega^i_\bbP)\,\isom \,\HH^{i+1}(X,\, E\tensor\Omega^{i+1}_\bbP),
\ \,\, 1\,\leq\, i \,\leq\, n-1.$$
Let $Y$ be a general hyperplane section and let $E_1:=E\tensor\Oh_Y$. We have a commutative diagram obtained from the Euler sequence
 \[
 \begin{array}{ccccccccc}
 &  &         0         &                          &        0          &     &        0            &  & \\    
      &      & \downarrow &                          & \downarrow &    & \downarrow  &  & \\
  & \to & W_1\tensor\HH^0(X,\, E(-2)) & \to & \HH^0(X,\,E(-1)) & \onto & \HH^1(X,\, E(-1)\tensor\Omega^1_{\bbP^{2n+2}}) & \to & 0\\
     &      & \downarrow &                          & \downarrow\times\ell &    & \downarrow  &  & \\
 & \to & W_1\tensor\HH^0(X,\, E(-1)) & \to & \HH^0(X,E) & \onto & \HH^1(X,\, E\tensor\Omega^1_{\bbP^{2n+2}}) & \to & 0\\
     &      & \downarrow &                          & \downarrow &    & \downarrow  &  & \\
 & \to & W_1\tensor\HH^0(X, \,E_1(-1)) & \to & \HH^0(Y,\,E_1) & \onto & \HH^1(Y,\, E\tensor\Omega^1_{\bbP^{2n+2}}\tensor\Oh_Y) & \to & 0 \\
     &      & \downarrow &                          & \downarrow &    & \downarrow  &  & \\
      &      &         0         &                          &        0          &     &        0            &  & \\    
 \end{array}
 \]
Further assume that $Y$ is such that the restriction $s_Y\,\neq\, 0$.
Suppose that $$s_Y \,\longmapsto\, \xi_Y\,=\,0$$ in the bottom row. This means that $s_Y$ is 
the restriction of $$\sum_i x_is_i \,\in\, W_1\tensor\HH^0(X,\,E(-1)),$$ 
where $x_i$ are the homogeneous coordinates.
Thus, we see that $$s -\sum_i x_is_i \,=\,\ell\cdot t$$ for some $t
\,\in\, \HH^0(X,\, E(-1))$, where $s$ is the section in the statement of the lemma. 
In particular, $s \,=\,\sum_i x_is'_i$, and 
hence the image of $s$ in $\HH^1(X,\, E\tensor\Omega^1_{\bbP^{2n+2}})$ is zero.
This is a contradiction. 
\end{proof}

\begin{thm}\label{oddcase}
Let $X\,\subset\,\bbP^{2n+2}$ be a general hypersurface of degree $d$ and dimension at least $3$,
and let $E$ be an ACM bundle on $X$ such that both $E$ and its dual $E^\vee$ satisfies 
Hypothesis \eqref{regularity}. Then any non-zero class $\xi\,\in\,\HH^n(X,\,E\tensor\Omega^n_X)$ yields 
a direct summand of $E$ isomorphic to ${\mathcal O}_X$.
\end{thm}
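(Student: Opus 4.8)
The plan is to reduce the odd-dimensional case to the even-dimensional case already settled in Theorem \ref{evencase} by cutting $X$ with a general hyperplane. First I would realize the given class as the image of a genuine section of $E$. By Proposition \ref{rho1_odd}, for $\dim X = 2n+1$ the restriction map $\rho\colon \HH^n(X,\,E\tensor\Omega^n_\bbP)\to \HH^n(X,\,E\tensor\Omega^n_X)$ is an isomorphism; and by Lemma \ref{partial2} the map $\partial\colon \HH^0(X,\,E)\to \HH^n(X,\,E\tensor\Omega^n_\bbP)$ is surjective, its hypothesis $\HH^i(X,\,E(-i))=0$ for $1\le i\le n$ holding because $E$ is ACM and $n<\dim X$. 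Hence $\rho\circ\partial$ is surjective, so the non-zero class can be written $\xi=\rho\circ\partial(s)$ for some $s\in \HH^0(X,\,E)$.

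Next I would apply Lemma \ref{restrictionhodgeclass} to choose a general smooth hyperplane section $Y=X\cap H$, of dimension $2n$, for which the image $\xi_Y\in \HH^n(Y,\,E_Y\tensor\Omega^n_Y)$ of the restricted section $s_Y:=s|_Y$ remains non-zero, where $E_Y:=E|_Y$. To invoke Theorem \ref{evencase} for $(Y,\,E_Y)$ I must verify its hypotheses. That $E_Y$ is ACM follows from the sequences $0\to E(k-1)\to E(k)\to E_Y(k)\to 0$ and the vanishing of the intermediate cohomology of $E$ on $X$; the dual $E_Y^\vee=(E^\vee)|_Y$ is ACM for the same reason. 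Since $\lfloor\dim X/2\rfloor=\lfloor\dim Y/2\rfloor=n$, the numerical threshold $nd-2n-2$ is unchanged, and Hypothesis $(\star)$ transfers from $X$ to $Y$: in the commutative square of multiplication maps the restrictions $\HH^0(X,\,E(k))\to \HH^0(Y,\,E_Y(k))$ and $\HH^0(X,\,\Oh_X(d))\to \HH^0(Y,\,\Oh_Y(d))$ are surjective (because $E$ and $\Oh_X$ are ACM and $\dim X\ge 2$), so surjectivity of the multiplication on $X$ forces that on $Y$; the identical argument applies to $E^\vee$.

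With the hypotheses in place, the argument proving Theorem \ref{evencase}, applied to the non-zero class $\xi_Y=\rho_Y(\partial_Y(s_Y))$, produces a section $t_Y\in \HH^0(Y,\,E_Y^\vee)$ pairing non-trivially with $s_Y$, i.e. the contraction $(s_Y,\,t_Y)\in \HH^0(Y,\,\Oh_Y)=k$ is non-zero. I would then lift this datum back to $X$: the sequence $0\to E^\vee(-1)\to E^\vee\to E_Y^\vee\to 0$ together with $\HH^1(X,\,E^\vee(-1))=0$ (from $E^\vee$ ACM, $\dim X\ge 3$) shows that $\HH^0(X,\,E^\vee)\to \HH^0(Y,\,E_Y^\vee)$ is surjective, so $t_Y$ lifts to some $t\in \HH^0(X,\,E^\vee)$. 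Because $(s,\,t)\in \HH^0(X,\,\Oh_X)=k$ restricts to $(s_Y,\,t_Y)\ne 0$ and $\HH^0(X,\,\Oh_X)\to \HH^0(Y,\,\Oh_Y)$ is the identity $k\to k$, the pairing $(s,\,t)$ is non-zero on $X$. Lemma \ref{split} then delivers the splitting $E\isom E'\oplus \Oh_X$.

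The main obstacle I anticipate is not any single cohomology computation but the genericity bookkeeping underlying the reduction. One must be sure that $Y=X\cap H$, for general $X$ and general $H$, is a general member of the family of degree $d$ hypersurfaces in $\bbP^{2n+1}$ in the precise sense demanded by the Convention of Section 1, and that $E_Y$ arises as the restriction of the ambient family bundle, so that Theorem \ref{evencase} is genuinely applicable to $(Y,\,E_Y)$ and $\xi_Y$ is a bona fide deforming Hodge class. One must also take $Y$ general enough to satisfy simultaneously the conclusion of Lemma \ref{restrictionhodgeclass} (so that $\xi_Y\ne 0$) and the genericity required by Theorem \ref{evencase}; since Lemma \ref{restrictionhodgeclass} already holds for general $Y$, these two requirements are compatible.
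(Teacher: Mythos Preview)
Your proposal is correct and follows essentially the same route as the paper: realize $\xi$ as $\rho\circ\partial(s)$, restrict to a general hyperplane section $Y$ where $\xi_Y\neq 0$ via Lemma~\ref{restrictionhodgeclass}, apply Theorem~\ref{evencase} on $Y$, and lift the resulting data back to $X$ using the ACM vanishing $\HH^1(E^\vee(-1))=0$. The only cosmetic difference is that the paper invokes the \emph{statement} of Theorem~\ref{evencase} to obtain a splitting $E_1\cong E_1'\oplus\Oh_Y$ and then lifts \emph{both} the inclusion and projection maps, whereas you invoke its \emph{proof} to obtain a $t_Y$ pairing nontrivially with your specific $s_Y$ and lift only $t_Y$; your variant is slightly more economical but the argument is the same, and your final paragraph correctly flags (and resolves) the genericity bookkeeping that the paper leaves implicit.
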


\begin{proof}
Since $X\,\subset\, \bbP^{2n+2}$ is a general hypersurface of degree $d$, and dimension at least $3$, 
a general hyperplane section of $X$ will yield a general hypersurface $Y$ of degree $d$ in $\bbP^{2n+1}$.
As in the statement, $E$ and $E^\vee$ are ACM vector bundles on $X$ satisfying Hypothesis $(\star)$. 
Then the restrictions $E_1\,:=\,E_{|Y}$ and $E_1^\vee$ are also ACM bundles on $Y$ satisfying Hypothesis 
$(\star)$.

Consider the sequence of maps
$$\HH^0(X,\,E) \,\stackrel{\partial}{\longrightarrow}\, \HH^n(X,\, E\tensor
\Omega^n_{\bbP^{2n+2}}) \,\stackrel{\rho}{\longrightarrow}\, \HH^n(X,\, E\tensor\Omega^n_X).$$
By Proposition \ref{rho1_odd}, the map $\rho$ is an isomorphism, and by Lemma \ref{partial2}, the map $\partial$
is a surjection. Hence there exists a non-zero section $s \,\in\, \HH^0(X,\, E)$ such that
$$\rho\circ\partial(s)\,=\, \xi.$$
Furthermore, by Lemma \ref{restrictionhodgeclass}, $\xi_Y$ is a non-zero Hodge class and consequently, 
by Theorem \ref{evencase}, the vector bundle $E_1$ splits off a
trivial rank one summand.

We claim that this splitting lifts to $X$ as well. To prove this, note that any map 
$\Oh_Y \,\longrightarrow\, E_1$ (respectively, $E_1 \,\longrightarrow\, \Oh_Y$) lifts to 
a map $\Oh_X \,\longrightarrow\, E$ (respectively, $E\,\longrightarrow\, \Oh_X$) because 
$E$ and $E^\vee$ are ACM (the restriction homomorphism $\HH^0(X,\,F)\,\longrightarrow\, 
\HH^0(Y,\, F_Y)$ is surjective for any ACM bundle $F$ on $X$). Now
the composition of maps $\Oh_X\,\longrightarrow\, E\,\longrightarrow\,\Oh_X$ is nowhere
zero since its restriction to $Y$ is nowhere zero. Hence $E$ has a direct summand
isomorphic to ${\mathcal O}_X$.
\end{proof}

\subsection{Proofs of Theorems}

\begin{proof}[Proof of Theorem \ref{ulrich}]
The proof is by contradiction.
Let $X$ be a general degree $d$ hypersurface of dimension at least four and 
suppose that $X$ supports an Ulrich bundle $E$.
Since $E$ is Ulrich, we have that
\begin{enumerate}
\item[(i)] $E$ is initialized and $0$-regular,

\item[(ii)] $E^\vee$ is $(d-1)$-regular, and

\item[(iii)] $h^0(E^\vee)\,=\,0$.
\end{enumerate}

It follows from these properties, and the exact sequence
$$0\,\longrightarrow\, E(-1)\,\longrightarrow\, E\,\longrightarrow\, E_{|Y}
\,\longrightarrow\, 0,$$
that the restriction of an Ulrich bundle on $X$ to a hyperplane section
$Y \,\subset\, X$ is also Ulrich (see \textsection~$3$, \cite{Beauville-Ulrich}).
Hence, without loss of generality, we may assume that $X$ is 
even-dimensional, namely $\dim{X}\,=\,2n\,\geq\, 4$.

Since $E$ is initialized, by Corollary \ref{init2}, there exists a non-zero Hodge cycle $\xi \,\in\, 
\HH^n(X,\, E\tensor\Omega^n_X)$, and by Serre duality a non-zero cycle $\xi^\perp \,\in\, 
\HH^n(X,\, E^\vee\tensor\Omega^n_X)$. The arguments in the proof of Theorem \ref{evencase}
show that $\xi^\perp$ deforms infinitesimally and hence $\xi^\perp\,\in\, \ker(\kappa)$.

Since $E$ is $0$-regular, this means that $E$ satisfies 
Hypothesis $(\star)$ and so by Theorem \ref{inlt} (applied to the (ACM) bundle $E^\vee$) and the surjectivity of the map 
$\partial$ (Lemma \ref{partial2}), we see that $h^0(E^\vee)\,\neq \,0$. This is a 
contradiction.
\end{proof}

\begin{proof}[Proof of Theorem \ref{nltforacm_new}]
By definition, if a bundle is $a$-regular, then it is also $(a+i)-$regular for any $i\,
\geq\, 0$. Therefore, $E$ and $E^\vee$ satisfy Hypothesis $(\star)$. The result now
follows from Theorem \ref{evencase} and Theorem \ref{oddcase}.
\end{proof}
 
\begin{proof}[Proof of Corollary \ref{init}]
We note that the dual of any initialized ACM bundle is $(d-1)$-regular,
and so both $E$ and $E^\vee$ satisfy Hypothesis $(\star)$.
Furthermore, we have an isomorphism 
$$\HH^0(X,\, E) \,\isom\, \HH^n(X, \,E\tensor\Omega^n_{\bbP}),$$ where 
$n\,=\,\lfloor\dim{X}/2\rfloor$ (Corollary \ref{init2}). Hence $E$ has non-zero $(n,\, n)$
Hodge cycles. Arguing as above, we see that $E\,\isom\,E'\oplus \Oh_X^{h^0(E)}$.

Let $a\,\geq\, 0$ be such that $E'_a\,:=\,E'(a)$ is initialized. Then $m(E'_a)\,=\,m(E')-a
\,\leq\,m(E)$. Hence $E'_a$ satisfies the hypothesis and thus we have $E'_a\,\isom\, E''\oplus 
\Oh_X^{h^0(E'_a)}$, or equivalently, $E'$ has a direct summand of the form 
$\Oh_X(-a)^{h^0(E'_a)}$.

Continuing in this fashion, we see that $E$ decomposes into a sum of line bundles.
 \end{proof}


\begin{thebibliography}{222}
 
\bibitem{AC} E. Arbarello and M.Cornalba, 
The Picard groups of the moduli spaces of curves,
{\it Topology} {\bf 26} (1987), 153--171.
 
\bibitem{AM} E. Arrondo and F. Malaspina, Cohomological characterization of vector 
bundles on Grassmannians of lines, {\it J. Algebra} {\bf 323} (2010), 1098--1106.

\bibitem{AT} E. Arrondo and A. Tocino, Cohomological characterization of Universal 
Bundles on $\mathbb{G}(1,n)$, {\it J. Algebra} {\bf 540} (2019), 206--233.

\bibitem{BHU} J. Herzog, B. Ulrich and J. Backelin, 
Linear maximal Cohen-Macaulay modules over strict complete intersections,
{\it J. Pure Appl. Alg.} {\bf 71} (1991), 187--202.

\bibitem{Beauville-Ulrich} A.~Beauville, An introduction to Ulrich bundles,
{\it Eur. J. Math.} {\bf 4}, no. 1 (2018), 26--36. 

\bibitem{Beauville-det} A.Beauville, Determinantal hypersurfaces,
{\it Michigan Math. J.} {\bf 48} (2000), 39--64. 

\bibitem{Beauville} A. Beauville, A remark on the generalized Franchetta conjecture for K3
surfaces, {\it Math Zeit.} (to appear).

\bibitem{BV} A. Beauville and C. Voisin, On the Chow ring of a K3 surface,
{\it J. Algebraic Geom.} {\bf 13} (2004), 417--426.

\bibitem{BL} N.Bergeron and Z. Li, Tautological classes on moduli spaces of hyper-K\"ahler 
manifolds, {\it Duke Math. J.} {\bf 168} (2019), 1179--1230.

\bibitem{BGS} R.-O. Buchweitz, G.-M. Greuel and F.-O. Schreyer, Cohen-Macaulay modules on 
hypersurface singularities II, {\it Invent. Math.} {\bf 88} (1987) 165--182.

\bibitem{Coskun} Emre Coskun,  A survey of Ulrich bundles, {\it Analytic and algebraic geometry}, 
85--106, Hindustan Book Agency, New Delhi, 2017.

\bibitem{Erman} D. Erman, Matrix factorizations of generic polynomials, arXiv preprint.

\bibitem{Franchetta} A. Franchetta, Sulle serie lineari razionalmente determinate sulla
curva a moduli generali di dato genere, {\it Matematiche (Catania)} {\bf 9} (1954), 126--147.

\bibitem{MGreen} Mark Green, Griffiths' infinitesimal invariant and the Abel-Jacobi map, 
{\it J. Differential Geom.} {\bf 29} (1989), no. 3, 545--555.

\bibitem{GH} P. Griffiths and J. Harris, On the Noether-Lefschetz theorem and some 
remarks on codimension-two cycles, {\it Math. Ann.} {\bf 271} (1985), 31--51.

\bibitem{Harer} J. L. Harer, The second homology group of the mapping class group of 
an orientable surface, {\it Invent. Math.} {\bf 72} (1983), 221--239.

\bibitem{Ha} R. Hartshorne, {\it Algebraic geometry}, Graduate Texts in Mathematics, No. 
52. Springer-Verlag, 1977.

\bibitem{Horrocks} G. Horrocks, Vector bundles on the punctured spectrum of a local ring, 
{\it Proc. London Math. Soc.} {\bf 14} (1964), 689--713.

\bibitem{KJ} K. Joshi, A Noether--Lefschetz theorem and applications,
{\it J. Algebraic Geom.} {\bf 4} (1995), 105--135.

\bibitem{Kleppe} H. Kleppe, Deformation of schemes defined by vanishing of Pfaffians,
{\it J. Algebra} {\bf 53} (1978), 84--92. 

\bibitem{MS} N. Mohan Kumar and V. Srinivas, The Noether-Lefschetz theorem, 
Unpublished notes.

\bibitem{KRR1} N. Mohan Kumar, A.~P.~Rao and G.~V.~Ravindra, Generators for vector 
bundles on generic hypersurfaces, {\it Math. Res. Lett.} {\bf 14} (2007), 649--655.

\bibitem{MPR1} N. Mohan Kumar, A.P. Rao and G.V. Ravindra,
Arithmetically Cohen-Macaulay bundles on hypersurfaces,
{\it Comment. Math. Helv.} {\bf 82} (2007), 829--843. 

\bibitem{MPR2} N. Mohan Kumar, A.P. Rao, and G.V. Ravindra, Arithmetically 
Cohen-Macaulay bundles on three dimensional hypersurfaces, {\it Int. Math. Res. 
Not.} (2007) no. 8, Art. ID rnm025, 11 pp.

\bibitem{KRR2} N. Mohan Kumar, A. P. Rao and G. V. Ravindra, On codimension two 
subvarieties in hypersurfaces, {\it Motives and algebraic cycles}, 167--174, Fields Inst. 
Commun., 56, Amer. Math. Soc., Providence, RI, 2009.

\bibitem{OG} K. O'Grady, Moduli of sheaves and the Chow group of K3 surfaces,
{\it J. Math. Pures Appl.} {\bf 100} (2013), 701--718.

\bibitem{R} G. V. Ravindra, The Noether-Lefschetz theorem via vanishing of coherent 
cohomology, {\it Canad. Math. Bull.} {\bf 51} (2008), 283--290.

\bibitem{RS} G. V. Ravindra and V. Srinivas, The Noether-Lefschetz theorem for the 
divisor class group, {\it J. Algebra} {\bf 322} (2009), 3373--3391.

\bibitem{R1} G.V. Ravindra, Curves on threefolds and a conjecture of Griffiths-Harris,
{\it Math. Ann.} {\bf 345} (2009), 731--748.

\bibitem{RT5} G.V. Ravindra, and A. Tripathi, Rank 3 ACM bundles on general hypersurfaces 
in $\bbP^5$, {\it Adv. Math.} {\bf 355} (2019), 106780.

\bibitem{RT6} G.V. Ravindra and A. Tripathi, On the base case of a conjecture on ACM 
bundles over hypersurfaces, {\it Geom. Dedicata} {\bf 216} (2022), issue 5, 10pp. 

\bibitem{AT1} A. Tripathi, Splitting of low-rank ACM bundles on hypersurfaces of high 
dimension, {\it Comm. Alg.} {\bf 44} (2016), 1011--1017.

\bibitem{AT2} A. Tripathi, Rank 3 arithmetically Cohen-Macaulay bundles over hypersurfaces,
{\it J. Algebra} {\bf 478} (2017), 1--11. 

\bibitem{Voisin} C. Voisin, Sur une conjecture de Griffiths et Harris, 
{\it Algebraic curves and projective geometry} (Trento, 1988), 270--275,
Lecture Notes in Math., 1389, Springer, Berlin, 1989. 
\end{thebibliography}
\end{document}